\def\newaliasedtheorem#1[#2]#3{
  \newaliascnt{#1@alt}{#2}
  \newtheorem{#1}[#1@alt]{#3}
  \expandafter\newcommand\csname #1@altname\endcsname{#3}
}
\theoremstyle{plain}
\newtheorem{theorem}{Theorem}[section]
\theoremstyle{definition}
\theoremstyle{remark}
\numberwithin{equation}{section}
\def\eps{\varepsilon}
\def\R{\mathbb R}
\def\Z{{\mathbb Z}}
\def\T{{\mathbb T}}
\DeclareMathOperator{\diver}{div}
\title[Regularity of Euler via interpolation theory]{Regularity results for rough solutions of the incompressible Euler equations via interpolation methods
}
\author[M. Colombo, L. De Rosa, L. Forcella]{Maria Colombo, Luigi De Rosa, \and Luigi Forcella}
\address{Maria Colombo  
\hfill\break  \'Ecole Polytechnique F\'ed\'erale de Lausanne, Institute of Mathematics, Station 8, CH-1015 Lausanne, Switzerland.}
\email{maria.colombo@epfl.ch}
\address{Luigi De Rosa 
\hfill\break  \'Ecole Polytechnique F\'ed\'erale de Lausanne, Institute of Mathematics, Station 8, CH-1015 Lausanne, Switzerland.}
\email{luigi.derosa@epfl.ch}
\address{Luigi Forcella 
\hfill\break  \'Ecole Polytechnique F\'ed\'erale de Lausanne, Institute of Mathematics, Station 8, CH-1015 Lausanne, Switzerland.}
\email{luigi.forcella@epfl.ch}
\subjclass[2000]{35Q31, 35A01, 35D30}
\keywords{Incompressible Euler equations, weak solutions, Interpolation Theory.  }
\begin{document}

\maketitle

\begin{abstract}
Given any solution $u$ of the Euler equations which is assumed to have some regularity in space -- in terms of Besov norms, natural in this context -- we show by interpolation methods that it enjoys a corresponding regularity in time and that the associated pressure $p$  is twice as regular as $u$. This generalizes a recent result by Isett \cite{Is2013} (see also Colombo and De Rosa \cite{CD18}), which covers the case of H\"older spaces.
\end{abstract}

\section{Introduction}\label{sec:intro}

In the spatial periodic setting $\T^3=\R^3 / \Z^3$, we consider the incompressible Euler equations 
\begin{equation}\label{E}
\left\{\begin{array}{l}
\partial_t u+  \diver(u \otimes u)+\nabla p =0\\ 
 \diver u = 0\,
\end{array}\right.\qquad \mbox{in } (0,T) \times \T^3
\end{equation}
where $u: (0,T)\times \T^3  \rightarrow \R^3$ represents the velocity of an incompressible fluid, $p:(0,T)\times \T^3 \rightarrow \R$ is the hydrodynamic pressure, with the constraint $\int_{\T^3}p\, dx =0$, which guaranties its uniqueness.

The interest for low-regularity solutions to the Euler equations is strongly related to Kolmogorov's 1941 theory of turbulence \cite{K41} and to the Onsager's conjecture \cite{Ons49}. In recent years, distributional solutions belonging to H\"older spaces were built with convex integration techniques, starting from the works of De Lellis and Sz\'{e}kelyhidi  \cite{DS2013,DS2014} and leading to the proof of the Onsager's conjecture by Isett, see \cite{Is2018}.

Such techniques were recently adapted by Buckmaster and Vicol to the Navier-Stokes equations by developing a Sobolev (rather than H\"older) based method \cite{BV2017}, which in turn appears also a recent work by Modena and Sz\'{e}kelyhidi in \cite{MoSz}.
For instance, in the context of the physical theory of intermittency it is currently an open problem (see  \cite[Open Problem 5]{BV2019}) to determine the best exponent $\theta$ such that $L^\infty((0,T); H^{\theta}(\T^3))$ solutions conserve the energy (it is known that for $\theta= 5/6$ conservation holds).


The following theorem provides a regularization property of the Euler equations, for solutions which enjoy some a priori Sobolev or Besov regularity in space. Roughly speaking, we prove that the pressure associated to any such solution enjoys double regularity in space with respect to $u$, and that both $u$ and $p$ enjoy a corresponding time regularity. In the main theorem  below, by $B^\theta_{s,\infty}$ we denote a Besov space, rigorously defined in \autoref{sec:tool}. The choice to work in these spaces is motivated to avoid an $\eps$-loss of regularity in time. 


\begin{theorem}\label{t:main}
Let $(u,p)$ be a distributional solution to \eqref{E} in $(0,T)\times \T^3$, for some $T<\infty$. For any $\theta\in (0,1)$, $s\in [1,\infty]$, $r\in (1,\infty)$, the following implications are true:
\begin{itemize}
\item[$(i)$] if $u\in L^{2s}((0,T);B^\theta_{2r,\infty}(\T^3))$, then $u\in B^\theta_{s,\infty}((0,T);L^r(\T^3))$ and  $p\in L^{s}((0,T);B^{2\theta}_{r,\infty}(\T^3))$;
\item[$(ii)$] if $u\in L^{3s}((0,T);B^\theta_{4r,\infty}(\T^3))$ and $\theta >1/2$, then $p\in B^{2\theta-1-\beta}_{s,\infty}((0,T);B^{1+\beta}_{r,\infty}(\T^3))$ for any $\beta \in [0,2\theta-1)$;
\item[$(iii)$] if $u\in  L^{3s}((0,T);B^\theta_{3r,\infty}(\T^3))$ and if  $\theta \leq 1/2$, then   $p\in B^{2\theta-\varepsilon}_{s,\infty}((0,T);L^r(\T^3)),$ for any $\varepsilon>0$. Moreover in the case $\theta >1/2$ we have  $ p\in W^{1,s}((0,T);B^{2\theta-1}_{r,\infty}(\T^3))$;
\item[$(iv)$]if $u\in  L^{6s}((0,T);B^\theta_{6r,\infty}(\T^3))$ and  $\theta>1/2$, then $ \partial_t p\in B^{2\theta-1-\varepsilon}_{s,\infty}((0,T);L^r(\T^3)),$ for any $\varepsilon>0$.
\end{itemize}
\end{theorem}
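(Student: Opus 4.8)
The plan is to derive everything from a single principle: the Euler equation gives, for differences in space, an exact identity expressing the time-increment of a suitable quantity as a sum of quadratic (or cubic) terms in $u$, and Besov regularity in time follows by estimating these terms in the appropriate $L^p_t$-in-time, $L^q_x$-in-space norms, using H\"older's inequality in time to trade the higher integrability exponents ($2s$, $3s$, $6s$) for $s$. Let me set up the mechanism for part $(i)$ first, since the others are variations.

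\medskip

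\noindent\textbf{Step 1: Commutator/pressure identities.} For a fixed spatial shift $h\in\T^3$ write $\delta_h f(x)=f(x+h)-f(x)$. Applying $\delta_h$ to the momentum equation and taking the divergence (in $x$) kills $\partial_t\delta_h u$ by incompressibility, yielding
\begin{equation*}
-\Delta \delta_h p=\operatorname{div}\operatorname{div}\bigl(\delta_h(u\otimes u)\bigr).
\end{equation*}
Now use the standard trick $\delta_h(u\otimes u)=\delta_h u\otimes u(\cdot+h)+u\otimes\delta_h u-?$; more precisely the symmetric ``quadratic'' rewriting $\delta_h(u\otimes u)=\delta_h u\otimes\delta_h u+u\otimes\delta_h u+\delta_h u\otimes u$ so that, since $\operatorname{div}(u\otimes\delta_h u+\delta_h u\otimes u)$ is controlled after one more derivative hits only $\delta_h u$ (the divergence-free part), one gets the Constantin--E--Titi–type identity
\begin{equation*}
\delta_h p=-\,\mathcal R\mathcal R:\bigl(\delta_h u\otimes\delta_h u\bigr)+(\text{lower-order, }L^\infty_x\text{-type terms}),
\end{equation*}
where $\mathcal R$ denotes Riesz transforms (bounded on $L^r$, $1<r<\infty$, which is exactly why the hypothesis $r\in(1,\infty)$ appears). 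From $\|\delta_h u\otimes\delta_h u\|_{L^r_x}\le\|\delta_h u\|_{L^{2r}_x}^2\lesssim|h|^{2\theta}\|u\|_{B^\theta_{2r,\infty}}^2$ we read off $\|\delta_h p\|_{L^r_x}\lesssim|h|^{2\theta}\|u\|_{B^\theta_{2r,\infty}}^2$, i.e. $p(t,\cdot)\in B^{2\theta}_{r,\infty}$ with norm bounded by $\|u(t)\|_{B^\theta_{2r,\infty}}^2$; integrating in time and applying H\"older with exponents $(2,2)$ gives $p\in L^s_t B^{2\theta}_{r,\infty}$. That is the pressure half of $(i)$.

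\medskip

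\noindent\textbf{Step 2: Time regularity of $u$.} For the time-increment, fix a time shift $\tau$ and integrate the equation: $\delta_\tau u(t,x)=-\int_t^{t+\tau}\bigl(\operatorname{div}(u\otimes u)+\nabla p\bigr)\,dt'$. This is not directly useful since $\operatorname{div}(u\otimes u)$ is only a distribution. The standard cure is to test against a mollified-in-space object: apply a Littlewood--Paley block $\Delta_j$ (or convolve with $\varphi_\lambda$), so that $\partial_t\Delta_j u=-\Delta_j\mathbb P\operatorname{div}(u\otimes u)$ with the right side an honest $L^r_x$ function, estimate $\|\Delta_j\delta_\tau u\|_{L^r_x}\lesssim|\tau|\,\lambda\,\|u\|_{L^\infty_t L^{2r}_x}^2$ on the one hand and $\|\Delta_j u\|_{L^r_x}\lesssim\lambda^{-\theta}\|u\|_{B^\theta_{2r,\infty}}$ on the other, split $\delta_\tau u=\Delta_{<\lambda}\delta_\tau u+\Delta_{\ge\lambda}\delta_\tau u$, and optimize $\lambda\sim|\tau|^{-1}$. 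This yields $\|\delta_\tau u(t)\|_{L^r_x}\lesssim|\tau|^{\theta}\sup_{t'}\|u(t')\|_{B^\theta_{2r,\infty}}^{?}$; carrying the time integrability honestly (one needs to integrate the $L^r_x$ norm of $\delta_\tau u$ in $t$ and use the $L^{2s}_t$ hypothesis on $u$ twice, H\"older in time again producing the exponent $s$) gives $u\in B^\theta_{s,\infty}((0,T);L^r)$. I'd phrase Steps 1--2 as one lemma converting ``quadratic expression, $L^{2s}_t L^{2r}_x$ input'' into ``$L^s_t L^r_x$ output with a $|h|^{2\theta}$ or $|\tau|^\theta$ gain''.

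\medskip

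\noindent\textbf{Step 3: the cubic statements $(ii)$--$(iv)$.} These all come from differentiating the pressure identity of Step 1 \emph{in space} or \emph{in time} and discovering a \emph{cubic} expression. For the time derivative: $\partial_t(-\Delta p)=\operatorname{div}\operatorname{div}\,\partial_t(u\otimes u)=\operatorname{div}\operatorname{div}(\partial_t u\otimes u+u\otimes\partial_t u)$ and then substituting $\partial_t u=-\mathbb P\operatorname{div}(u\otimes u)$ produces a trilinear term in $u$ plus derivatives; after the same space-increment manipulation one is left controlling $\|\delta_h(u\otimes u\otimes u)\|$ in $L^r_x$, which needs $u\in L^{3r}_x$ with a $|h|^\theta$-type gain on \emph{one} factor — hence the exponents $3s,4r$ (resp. $3r,6r$) and the restriction $\theta>1/2$ (needed so that after losing one full derivative to $\operatorname{div}$ a positive amount $2\theta-1$ of smoothness survives). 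For $(ii)$ the interpolation between $B^{1+\beta}_{r,\infty}$ in space and $B^{2\theta-1-\beta}_{s,\infty}$ in time is obtained by the standard ``two-sided'' increment estimate: bound $\|\delta_h\delta_\tau p\|_{L^r_x}$ by $\min(|h|^{1+\beta}|\tau|^{0},\ \dots)$ using the PDE, then interpolate; the $\varepsilon$-losses in $(iii)$–$(iv)$ reflect the borderline $\theta=1/2$ case where one cannot take $\beta=2\theta-1$. I would organize $(ii)$–$(iv)$ as corollaries of a single cubic lemma analogous to Steps 1--2.

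\medskip

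\noindent\textbf{Main obstacle.} The delicate point is \emph{not} the algebra of the commutator identities but making the time-increment estimates quantitative without an $\varepsilon$-loss — i.e. genuinely landing in $B^\theta_{s,\infty}$ and not merely $B^{\theta-\varepsilon}_{s,\infty}$ or $C^0_t$. This forces working with Besov (rather than Sobolev/H\"older) spaces throughout, and the heart of the argument is a Littlewood--Paley splitting in which the ``low-frequency'' piece is estimated through the PDE (one derivative, paid for by $|\tau|$) and the ``high-frequency'' piece through the spatial hypothesis (gain $\lambda^{-\theta}$), with the two balanced so the supremum over $j$ of $2^{j\theta}\|\Delta_j\delta_\tau u\|$ stays bounded \emph{uniformly} in $\tau$ — keeping careful track of which time-$L^p$ norm each factor carries and applying H\"older in time at exactly the right place so that the quadratic (resp. cubic) input exponent $2s$ (resp. $3s,6s$) collapses to the claimed output exponent $s$. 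For $(iii)$–$(iv)$ a secondary subtlety is that near $\theta=1/2$ one must either accept the stated $\varepsilon$ or, when $\theta>1/2$, upgrade Besov-in-time to $W^{1,s}$/$\partial_t p$ statements by noting the time-increment bound is then linear in $|\tau|$.
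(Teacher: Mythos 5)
Your outer scheme for the time regularity of $u$ and for collapsing the time exponents $2s,3s,6s$ down to $s$ via H\"older in time matches the paper's: the paper mollifies in space at scale $\delta$, bounds $\partial_t u_\delta$ through the mollified equation, and chooses $\delta=h$, which is the same low/high-frequency balancing you describe with Littlewood--Paley blocks. The genuine gap is in your Step 1, and it propagates to Step 3: the doubling of spatial regularity of the pressure. From $-\Delta\delta_h p=\diver\diver(\delta_h u\otimes\delta_h u)+\diver\diver(u\otimes\delta_h u+\delta_h u\otimes u)$ the diagonal term indeed yields $|h|^{2\theta}$, but the cross terms are \emph{not} ``lower-order'': a direct Calder\'on--Zygmund bound on $\mathcal R_i\mathcal R_j(u^i\,\delta_h u^j)$ only gives $|h|^\theta\|u\|_{L^{2r}}\|u\|_{B^\theta_{2r,\infty}}$, i.e. $p\in B^{\theta}_{r,\infty}$ rather than $B^{2\theta}_{r,\infty}$. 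Handling these cross terms is precisely the content of the double-regularity statement; in the H\"older setting \cite{C2014} and \cite{CD18} do it through a delicate decomposition of the kernel of $\partial_i\partial_j\Delta^{-1}$, and the present paper replaces that by an abstract bilinear real-interpolation theorem (\autoref{bil:thm}) applied to $T(u,w)=\Delta^{-1}\diver\diver(u\otimes w)$ on divergence-free fields, with endpoint estimates $L^{2r}\times L^{2r}\to L^r$, $L^{2r}\times W^{1,2r}\to W^{1,r}$ and $W^{1,2r}\times W^{1,2r}\to W^{2,r}$ (the off-diagonal one exploiting $\diver\diver(u\otimes w)=\partial_j u^i\,\partial_i w^j$), together with the identification of Besov spaces as real interpolation spaces of Sobolev spaces. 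Your proposal contains no substitute for this mechanism, and the same issue recurs in (ii)--(iv): there the paper needs the unbalanced bilinear estimate \eqref{est_p_bil} with $\gamma=1-\theta+\beta$ placed on the factor $\delta_\tau u$, and a companion \emph{trilinear} interpolation theorem to extract the gain $\delta^{2\theta-1-\varepsilon}$ from $\diver\diver\diver(u_\delta\otimes u_\delta\otimes u_\delta)$; asserting that a cubic expression appears is not enough without an estimate that triples (minus one derivative) the regularity.

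A secondary point: for $\theta>1/2$ the conclusion $p(t)\in B^{2\theta}_{r,\infty}$ cannot be read off from a first-difference bound $\|\delta_h p\|_{L^r}\lesssim|h|^{2\theta}$, since an exponent larger than $1$ in a first-difference estimate forces $p$ to be constant; one must control second differences or differences of $\nabla p$. This is automatic in the interpolation framework (the interpolation space between $L^r$ and $W^{2,r}$ is the correctly defined $B^{2\theta}_{r,\infty}$) but not in the identity you wrote, so even the bookkeeping of Step 1 needs to be redone in terms of second differences.
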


Then we obtain the following corollary on the Sobolev solutions by considering suitable embeddings between Sobolev and Besov spaces.
\begin{corollary}\label{t:main-easy}
Let $(u,p)$ be a distributional solution to \eqref{E} in $(0,T)\times \T^3$, for some $T<\infty$. For any $\theta\in (0,1)$, $s\in [1,\infty]$, $r\in (1,\infty)$, the following implications hold true:
\begin{itemize}
\item[$(i)$] if $u\in L^{2s}((0,T);W^{\theta,2r}(\T^3))$, then 
$u\in W^{\theta-\eps,s}((0,T);L^r(\T^3))$ and $ p\in L^{s}((0,T);W^{2\theta-\eps,r}(\T^3));$
\item[$(ii)$] if $\theta \leq 1/2$ and $u\in  L^{3s}((0,T); W^{\theta,3r}(\T^3))$, or if $\theta>1/2$ and $u\in  L^{6s}((0,T);W^{\theta,6r}(\T^3))$,
then  $p\in W^{2\theta-\varepsilon,s}((0,T);L^r(\T^3)).$
\end{itemize}
\end{corollary}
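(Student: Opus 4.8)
The plan is to deduce \autoref{t:main-easy} from \autoref{t:main} using only the standard continuous inclusions between the Sobolev--Slobodeckij and Besov scales, applied both in the space variable on $\T^3$ and in the time variable on $(0,T)$. The two facts I would isolate at the outset are: $(a)$ for noninteger $\sigma>0$ and $p\in(1,\infty)$ one has $W^{\sigma,p}=B^{\sigma}_{p,p}\hookrightarrow B^{\sigma}_{p,\infty}$, by monotonicity in the third Besov index; and $(b)$ for every $\sigma>0$ and every $\delta\in(0,\sigma)$ with $\sigma-\delta\notin\N$ one has $B^{\sigma}_{p,\infty}\hookrightarrow B^{\sigma-\delta}_{p,p}=W^{\sigma-\delta,p}$, since decreasing the smoothness index by any positive amount absorbs the gap between the third indices. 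Both $(a)$ and $(b)$ hold verbatim for the vector-valued spaces $L^{a}((0,T);X)$ and for the Besov spaces in time introduced in \autoref{sec:tool}; in addition I would freely use $L^{a}(0,T)\hookrightarrow L^{b}(0,T)$ for $a\ge b$ (as $T<\infty$) and $B^{\sigma}_{p_1,q}(\T^3)\hookrightarrow B^{\sigma}_{p_2,q}(\T^3)$ for $p_1\ge p_2$. Since the statements are quantified over all $\eps>0$, the successive losses produced by $(b)$ may compound freely, and one always picks the small parameter so that the target smoothness index is noninteger, which costs nothing by monotonicity of Sobolev spaces in that index.

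Granting this, part $(i)$ is immediate: by $(a)$ the hypothesis gives $u\in L^{2s}((0,T);B^{\theta}_{2r,\infty}(\T^3))$, so \autoref{t:main}$(i)$ yields $u\in B^{\theta}_{s,\infty}((0,T);L^{r}(\T^3))$ and $p\in L^{s}((0,T);B^{2\theta}_{r,\infty}(\T^3))$, and applying $(b)$ in time to $u$ and in space to $p$ gives $u\in W^{\theta-\eps,s}((0,T);L^{r}(\T^3))$ and $p\in L^{s}((0,T);W^{2\theta-\eps,r}(\T^3))$. Likewise, in part $(ii)$ with $\theta\le 1/2$ the hypothesis gives $u\in L^{3s}((0,T);B^{\theta}_{3r,\infty}(\T^3))$ by $(a)$, \autoref{t:main}$(iii)$ gives $p\in B^{2\theta-\eps}_{s,\infty}((0,T);L^{r}(\T^3))$ for every $\eps>0$, and one further application of $(b)$ in time, after relabeling $\eps$, produces $p\in W^{2\theta-\eps,s}((0,T);L^{r}(\T^3))$.

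The case $\theta>1/2$ of part $(ii)$ is where the only genuine bookkeeping lies, and is the step I would treat most carefully, since \autoref{t:main}$(iv)$ controls $\partial_t p$ rather than $p$ and since the single hypothesis must be fed into two items of \autoref{t:main} with different integrability exponents. By $(a)$, $u\in L^{6s}((0,T);W^{\theta,6r}(\T^3))$ gives $u\in L^{6s}((0,T);B^{\theta}_{6r,\infty}(\T^3))$, which is exactly the hypothesis of \autoref{t:main}$(iv)$, so $\partial_t p\in B^{2\theta-1-\eps}_{s,\infty}((0,T);L^{r}(\T^3))$ for every $\eps>0$; downgrading further via $L^{6s}\hookrightarrow L^{2s}$ in time and $B^{\theta}_{6r,\infty}(\T^3)\hookrightarrow B^{\theta}_{2r,\infty}(\T^3)$ in space, we also get $u\in L^{2s}((0,T);B^{\theta}_{2r,\infty}(\T^3))$, so \autoref{t:main}$(i)$ yields $p\in L^{s}((0,T);B^{2\theta}_{r,\infty}(\T^3))\hookrightarrow L^{s}((0,T);L^{r}(\T^3))$ (the ``moreover'' part of \autoref{t:main}$(iii)$ would do as well). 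Applying $(b)$ in time to the first conclusion gives $\partial_t p\in W^{2\theta-1-\eps',s}((0,T);L^{r}(\T^3))$ for every $\eps'>0$, with $2\theta-1-\eps'\in(0,1)$ once $\eps'$ is small; combining this with $p\in L^{s}((0,T);L^{r}(\T^3))$ and the characterization $W^{1+\tau,s}((0,T);X)=\{\,f\in L^{s}((0,T);X):\partial_t f\in W^{\tau,s}((0,T);X)\,\}$ for $\tau\in(0,1)$ on the bounded interval, one obtains $p\in W^{2\theta-\eps',s}((0,T);L^{r}(\T^3))$, and relabeling $\eps'$ as $\eps$ finishes the proof.
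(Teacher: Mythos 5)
Your proposal is correct and follows essentially the same route as the paper, whose proof is a one-line appeal to \autoref{t:main} together with the embeddings $W^{\theta,r}\hookrightarrow B^\theta_{r,\infty}\hookrightarrow W^{\gamma,r}$ for $\gamma<\theta$; you simply spell out the bookkeeping, including the only nontrivial step (the case $\theta>1/2$ of $(ii)$, where the information on $\partial_t p$ from \autoref{t:main}$(iv)$ must be combined with $p\in L^s((0,T);L^r)$ to reach $W^{2\theta-\eps,s}$ in time), which the paper leaves implicit.
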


When $s=r=\infty$, identifying $W^{\theta,\infty}$ with the corresponding H\"older space, the previous theorem corresponds formally to \cite[Theorem 1.1]{Is2013} and \cite[Theorem 1.1]{CD18}: roughly speaking, it says that if $(u,p) $ is a distributional solution to \eqref{E}, $\theta\in (0,1)$ and $u\in  L^{\infty}((0,T); C^{\theta}(\T^3))$, then $u\in C^{\theta-\eps}((0,T);L^\infty(\T^3)),$ namely $u \in C^{\theta-\varepsilon}((0,T)\times \T^3)$ and $p\in C^{2\theta-\varepsilon}((0,T)\times \T^3).$\\

\autoref{t:main} follows from two main ingredients: on one side, we obtain the time regularity by estimating, for any time increment $h$, some norm $||u(t+h)-u(t)||$ by comparison between $u$ and the convolution of $u$ with a mollification kernel at some scale $\delta$, which is then linked to $h$. 
On the other side, to obtain the double regularity of the pressure we look at 
\begin{equation}
\label{eqn:p}
-\Delta p =\diver \diver (u \otimes u),
\end{equation}
which is the formal equation solved by $p.$ We consider a bilinear operator which associates to two divergence-free vector fields $(u,v)$ the solution to $-\Delta p= \diver \diver (u \otimes v)$ and we apply an abstract interpolation result for bilinear operators (see \autoref{thm:tri} below).
Previous results on the regularity of the pressure in H\"older spaces (see \cite{C2014} and \cite{CD18}) were instead based on suitable representation formulas for the pressure by means of the Green kernel of the Laplacian, while this strategy using real interpolation methods seems to be new in this context.

\section{Preliminary tools and notations}\label{sec:tool}
Along the paper, we will consider $\T^3$ as spatial domain, identifying it with the 3-dimensional cube $[0,1]^3 \subset \R^3 $. Thus for any $f:\T^3\to\R^3$ we will  always work with its periodic extension to the whole space.\\ 

We will define the norms for a domain $\Omega\subseteq \R^d$, for a general dimension $d\geq 1$, since in this way we can handle both the space and the rime regularities. Let $\Omega\subseteq \R^d$ be an open and Lipschitz domain. For $\theta \in (0,\infty)$, $r,s\in[1,\infty]$, the $L^r(\Omega)$ and $W^{\theta,r}(\Omega)$ spaces are the classical Lebesgue and Sobolev-Slobodeckij spaces, with the usual identifications $W^{0,r}(\Omega)=L^r(\Omega)$ and $W^{\theta,\infty}(\Omega)=C^\theta(\Omega)$. We first define the Besov spaces on the whole $\R^d$, then their version on general open sets $\Omega$ will be defined by extension. For any $\theta\in(0,\infty)$, let $\theta^-$ to be the biggest integer which is strictly less than $\theta$. For any non integer $\theta \in(0,\infty)$, the  Besov space $B^\theta_{r,s}(\R^d)$ is the space of functions $f\in W^{\theta^-, r} (\R^d) $ such that
\begin{align*}
[f]_{B^\theta_{r,s}(\R^d)}=\sum_{|\alpha|=  \theta^-}\left(\int_{\R^d}\frac{1}{|h|^{d+(\theta-\theta^-)s}}\left( \int_{\R^d} |D^\alpha f(x+h)-D^\alpha f(x)|^r\,dx\right)^{\frac{s}{r}}\,dh\right)^{\frac{1}{s}}<\infty,
\end{align*}
with the usual generalization when $r,s=\infty$. The full Besov norm will be then given by 
\[
\|f\|_{B^\theta_{r,s}(\R^d)}=\|f\|_{W^{\theta^-, r} (\R^d) }+[f]_{B^\theta_{r,s}(\R^d)}.
\]
If instead $\theta>0$ is an integer, the Besov space $B^\theta_{r,s}(\R^d)$ consists of all the functions $f\in W^{\theta, r}(\R^d)$, such that
\[
[f]_{B^\theta_{r,s}(\R^d)}=\sum_{|\alpha|=\theta}\left(\int_{\R^d}\frac{1}{|h|^{d+s}}\left( \int_{\R^d} |D^\alpha f(x+2h)-2D^\alpha f(x+h)+D^\alpha f(x) |^r\,dx\right)^{\frac{s}{r}}\,dh\right)^{\frac{1}{s}}<\infty,
\]
again with the usual generalization when $r,s=\infty$. Thus the full norm will be given by
\[
\|f\|_{B^\theta_{r,s}(\R^d)}=\|f\|_{W^{ \theta, r} (\R^d) }+[f]_{B^\theta_{r,s}(\R^d)}.
\]
For any open and Lipschitz set $\Omega$ we then define
\[
B^\theta_{r,s}(\Omega)=\left\{f:\Omega\rightarrow \R^d\, \, \hbox{s.t.}\, \, \exists\,\, \tilde f \in B^\theta_{r,s}(\R^d),\, \, \tilde f|_{\Omega}=f \right\},
\]
where the semi-norm is given by 
\[
[f]_{B^\theta_{r,s}(\Omega)}=\inf \left\{ [\tilde f]_{B^\theta_{r,s}(\R^d)},\, \, \tilde f|_{\Omega}=f   \right\}.
\]
By the definitions above we have that for any non integer $\theta\in (0,\infty)$, $B^\theta_{r,r}(\Omega)=W^{\theta,r}(\Omega)$ for any $r\in [1,\infty]$, which in the case $r=\infty$ gives $B^\theta_{\infty,\infty}(\Omega)=C^\theta(\Omega)$.
Moreover, since the domain $\Omega$ is Lipschitz, we always have the existence of a linear extension operator to the whole space. It is well know that this operator turns out to be also continuous between every Sobolev or Besov spaces. \\

Considering the flat $d$-dimensional torus $\T^d$, we define the Besov norm as above with $\Omega=[0,4]^d$ that is, we compute the norm in $4$ copies of $\T^d$. 
\\

Dealing with time dependent vector fields $u=u(t,x)$, we will use the notations $[u(t)]$ and $\|u(t)\|$ when the spatial semi-norm or norm, respectively, are computed at the fixed time $t$.\\

\noindent We give the following interpolation result in Besov spaces.
\begin{prop}
Let $\Omega\subset \R^d$ be an open  and Lipschitz set. For any $r\in [1,\infty]$, $\theta, \gamma \in (0,1)$ with $\theta\geq \gamma$, there exists a constant $C>0$ such that
\begin{align}\label{interp_besov}
[f]_{B^\gamma_{r,\infty}(\Omega)}&\leq C \|f\|_{L^r(\Omega)}^{1-\frac{\gamma}{\theta}}\|f\|_{B^\theta_{r,\infty}(\Omega)}^{\frac{\gamma}{\theta}},\\\label{interp_besov_1}
[f]_{B^\theta_{r,\infty}(\Omega)}&\leq C \|f\|_{B^\gamma_{r,\infty}(\Omega)}^{\frac{1-\theta}{1-\gamma}}\| f\|_{W^{1,r}(\Omega)}^{\frac{\theta-\gamma}{1-\gamma}}.
\end{align}
\end{prop}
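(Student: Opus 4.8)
The plan is to prove both inequalities first on the whole space $\R^d$ and then transfer them to $\Omega$ through a single linear extension operator. I would fix an extension operator $E$ which, as recalled above, is simultaneously continuous $L^r(\Omega)\to L^r(\R^d)$, $W^{1,r}(\Omega)\to W^{1,r}(\R^d)$ and $B^\sigma_{r,\infty}(\Omega)\to B^\sigma_{r,\infty}(\R^d)$ for $\sigma=\gamma,\theta$; since $\Omega$ is Lipschitz such an operator exists. Because $Ef$ agrees with $f$ on $\Omega$, the definition of the Besov seminorm on $\Omega$ as an infimum over extensions gives $[f]_{B^\gamma_{r,\infty}(\Omega)}\le[Ef]_{B^\gamma_{r,\infty}(\R^d)}$ and $[f]_{B^\theta_{r,\infty}(\Omega)}\le[Ef]_{B^\theta_{r,\infty}(\R^d)}$, so it will be enough to prove the two estimates for $g:=Ef$ on $\R^d$ and then bound the full norms of $g$ on $\R^d$ by the corresponding norms of $f$ on $\Omega$ using the continuity of $E$. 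Throughout I would use that, since $\theta,\gamma\in(0,1)$ (so $\theta^-=\gamma^-=0$) and the third index is $\infty$, the Besov seminorm is simply the modulus-of-continuity quantity $[g]_{B^\sigma_{r,\infty}(\R^d)}=\sup_{h\ne0}|h|^{-\sigma}\,\|g(\cdot+h)-g\|_{L^r(\R^d)}$.

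On $\R^d$, fixing $h\ne0$, the estimate \eqref{interp_besov} would come from interpolating the two bounds $\|g(\cdot+h)-g\|_{L^r}\le2\|g\|_{L^r}$ and $\|g(\cdot+h)-g\|_{L^r}\le|h|^\theta[g]_{B^\theta_{r,\infty}}$: raising them to the powers $1-\frac\gamma\theta$ and $\frac\gamma\theta$ (both in $[0,1]$, which is exactly where $\gamma\le\theta$ enters) gives
\[
\|g(\cdot+h)-g\|_{L^r}\le2^{1-\frac\gamma\theta}\,|h|^{\gamma}\,\|g\|_{L^r}^{1-\frac\gamma\theta}\,[g]_{B^\theta_{r,\infty}}^{\frac\gamma\theta},
\]
and dividing by $|h|^\gamma$ and taking the supremum over $h$ yields $[g]_{B^\gamma_{r,\infty}}\le2\,\|g\|_{L^r}^{1-\gamma/\theta}\,\|g\|_{B^\theta_{r,\infty}}^{\gamma/\theta}$. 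For \eqref{interp_besov_1} I may assume its right-hand side is finite, so in particular $g\in W^{1,r}(\R^d)$; then $g(x+h)-g(x)=\int_0^1\nabla g(x+th)\cdot h\,dt$ and Minkowski's integral inequality give $\|g(\cdot+h)-g\|_{L^r}\le|h|\,\|\nabla g\|_{L^r}$, and interpolating this against $\|g(\cdot+h)-g\|_{L^r}\le|h|^\gamma[g]_{B^\gamma_{r,\infty}}$ with the powers $\frac{\theta-\gamma}{1-\gamma}$ and $\frac{1-\theta}{1-\gamma}$ (again in $[0,1]$ because $\gamma\le\theta$, and chosen precisely so that the power of $|h|$ produced is $\theta$) gives $\|g(\cdot+h)-g\|_{L^r}\le|h|^{\theta}\,[g]_{B^\gamma_{r,\infty}}^{\frac{1-\theta}{1-\gamma}}\,\|\nabla g\|_{L^r}^{\frac{\theta-\gamma}{1-\gamma}}$; dividing by $|h|^\theta$ and taking the supremum gives $[g]_{B^\theta_{r,\infty}}\le[g]_{B^\gamma_{r,\infty}}^{(1-\theta)/(1-\gamma)}\,\|\nabla g\|_{L^r}^{(\theta-\gamma)/(1-\gamma)}$, a fortiori with the full norms $\|g\|_{B^\gamma_{r,\infty}}$ and $\|g\|_{W^{1,r}}$ on the right.

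To finish I would transfer back: $[f]_{B^\gamma_{r,\infty}(\Omega)}\le[Ef]_{B^\gamma_{r,\infty}(\R^d)}$, then the $\R^d$ version of \eqref{interp_besov} for $g=Ef$, then $\|Ef\|_{L^r(\R^d)}\le C\|f\|_{L^r(\Omega)}$ and $\|Ef\|_{B^\theta_{r,\infty}(\R^d)}\le C\|f\|_{B^\theta_{r,\infty}(\Omega)}$; the same scheme, additionally using $\|Ef\|_{W^{1,r}(\R^d)}\le C\|f\|_{W^{1,r}(\Omega)}$, gives \eqref{interp_besov_1}. The argument is essentially elementary, so I expect the only mild obstacle to be bookkeeping: making sure one and the same extension operator is used in all the spaces at once, and checking that the endpoint cases $r=\infty$ (where the $L^r$ norms become sup norms) pass through the above word for word. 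As an alternative, the whole statement can be derived abstractly from $B^\sigma_{r,\infty}(\R^d)=(L^r,W^{1,r})_{\sigma,\infty}$ and the reiteration theorem, which identify $B^\gamma_{r,\infty}=(L^r,B^\theta_{r,\infty})_{\gamma/\theta,\infty}$ and $B^\theta_{r,\infty}=(B^\gamma_{r,\infty},W^{1,r})_{(\theta-\gamma)/(1-\gamma),\infty}$ and hence yield the two interpolation inequalities directly (again combined with the extension operator to handle $\Omega$).
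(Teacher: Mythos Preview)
Your proposal is correct and follows essentially the same approach as the paper: prove both inequalities on $\R^d$ via the modulus-of-continuity characterization of $B^\sigma_{r,\infty}$, then transfer to $\Omega$ by a continuous extension operator. The only cosmetic difference is in \eqref{interp_besov}: the paper derives it from the additive bound $[f]_{B^\gamma_{r,\infty}}\le 2(\|f\|_{L^r}+[f]_{B^\theta_{r,\infty}})$ combined with a rescaling $f\mapsto f(\varepsilon\cdot)$ and optimization over $\varepsilon$, whereas you interpolate the two increment bounds directly by raising them to complementary powers---both arguments are standard and equivalent, and for \eqref{interp_besov_1} your proof is identical to the paper's.
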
 
\noindent Note that the same inequalities hold if one replaces all the semi-norms with the full norms.
\begin{proof}
We start by proving \eqref{interp_besov} and \eqref{interp_besov_1} in the whole space $\R^d$. Note that for every $f \in B^\theta_{r,\infty}(\R^d)$ and $\theta\geq \gamma$, we have 
\begin{equation}\label{besov_1}
[f]_{B^\gamma_{r,\infty}(\R^d)}\leq 2\left( \|f\|_{L^r(\R^d)}+[f]_{B^\theta_{r,\infty}(\R^d)}\right).
\end{equation}
By plugging in \eqref{besov_1} the rescaled function $f(\varepsilon x)$, we also get 
\[
\varepsilon^\gamma [f]_{B^\gamma_{r,\infty}(\R^d)}\leq 2\left( \|f\|_{L^r(\R^d)}+\varepsilon^\theta [f]_{B^\theta_{r,\infty}(\R^d)}\right),
\]
for every $\varepsilon>0$. Thus by choosing $\varepsilon=\|f\|_{L^r(\R^d)}^{\frac{1}{\theta}}[f]^{-\frac{1}{\theta}}_{B^\theta_{r,\infty}(\R^d)}$, we get
\eqref{interp_besov} for $\Omega = \R^d$. 
Take now $\lambda\in [0,1)$ such that $(1-\lambda)\gamma+\lambda=\theta$. We estimate
\begin{align*}
\frac{\|f(\cdot+y) -f(\cdot)\|_{L^r(\R^d)}}{|y|^\theta}&=\left(\frac{\|f(\cdot+y) -f(\cdot)\|_{L^r(\R^d)}}{|y|^\gamma}\right)^{1-\lambda}\left(\frac{\|f(\cdot+y) -f(\cdot)\|_{L^r(\R^d)}}{|y|}\right)^{\lambda}\\
&\leq [f]_{B^\gamma_{r,\infty}(\R^d)}^{1-\lambda}\| \nabla f \|_{L^r(\R^d)}^\lambda,
\end{align*}
from which, since $\lambda=\frac{\theta-\gamma}{1-\gamma}$,   we conclude
\eqref{interp_besov_1} for $\Omega = \R^d$. 
If $f\in B^\theta_{r,\infty}(\Omega)$ for $\Omega$ as in the statement, \eqref{interp_besov} and \eqref{interp_besov_1} easily follow from their versions in $\R^d$ and the existence of a (continuous) extension operator.
\end{proof}

Let $\varphi\in\mathcal C^{\infty}_c(\R^d)$ a smooth, nonnegative and compactly supported function with $\|\varphi\|_{L^1}=1.$  For any $\delta>0$ we define $\varphi_\delta(x)=\delta^{-d}\varphi(x/\delta)$ and we consider, for any vector field $f:\T^d\to\R^d$ its regularization 
$
f_\delta(x)=(f\ast\varphi_\delta)(x)=\int_{\R^d}f(x-y)\varphi_\delta(y)\,dy.
$ We conclude this section by recalling some classical estimates. The third one is for instance the one used in \cite{CET94} to prove the positive statement of the Onsager's conjecture.
\begin{prop} For any $f: \T^d\to \R^d,$ $\theta\in(0,1),$ $r\in[1,\infty]$ and any integer $n\geq 0$, we have the following
\begin{align}\label{molli:1}
\|f-f_\delta\|_{L^r(\T^d)}&\leq C\delta^\theta\|f\|_{B^\theta_{r,\infty}(\T^d)},\\\label{molli:2}
\|f_\delta\|_{W^{n+1,r}(\T^d)}&\leq C\delta^{\theta-n-1}\|f\|_{B^\theta_{r,\infty}(\T^d)},\\\label{molli:3}
\|f_\delta\otimes f_\delta-(f\otimes f)_\delta\|_{W^{n,r}(\T^d)}&\leq C\delta^{2\theta-n}\|f\|^2_{B^{\theta}_{2r,\infty}(\T^d)},
\end{align}
for some constant $C>0$ depending on $\theta,r, n$ but otherwise independent of $\delta$.
\end{prop}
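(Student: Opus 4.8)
The plan is to obtain all three estimates from one principle: since $\int\varphi_\delta=1$, the mollification error is an average of \emph{increments} of $f$, and increments are precisely what the seminorm $[\,\cdot\,]_{B^\theta_{r,\infty}}$ controls, through the difference-quotient bound $\|f(\cdot+h)-f(\cdot)\|_{L^r}\le|h|^\theta[f]_{B^\theta_{r,\infty}}$ valid for $\theta\in(0,1)$ (where $\theta^-=0$); on $\T^d$ one argues with the periodic extension, which only affects constants, and throughout we may take $\delta\le1$. For \eqref{molli:1} I would write $f(x)-f_\delta(x)=\int(f(x)-f(x-y))\varphi_\delta(y)\,dy$ and apply Minkowski's integral inequality, getting $\|f-f_\delta\|_{L^r}\le[f]_{B^\theta_{r,\infty}}\int|y|^\theta\varphi_\delta(y)\,dy$, where the last integral equals $\delta^\theta\int|z|^\theta\varphi(z)\,dz=C\delta^\theta$ after the rescaling $y=\delta z$. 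For \eqref{molli:2}, since $\int D^\alpha\varphi_\delta=0$ whenever $|\alpha|\ge1$, for such $\alpha$ one has $D^\alpha f_\delta(x)=\int(f(x-y)-f(x))D^\alpha\varphi_\delta(y)\,dy$, hence by Minkowski and the same rescaling $\|D^\alpha f_\delta\|_{L^r}\le C\delta^{\theta-|\alpha|}[f]_{B^\theta_{r,\infty}}$; summing over $1\le|\alpha|\le n+1$ and using $\|f_\delta\|_{L^r}\le\|f\|_{L^r}$ for the zeroth-order term yields the claim, the dominant power being $\delta^{\theta-n-1}$.

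For \eqref{molli:3} the algebraic heart is the Constantin--E--Titi identity, which one checks directly from $\int\varphi_\delta=1$:
\[
f_\delta(x)\otimes f_\delta(x)-(f\otimes f)_\delta(x)=-\int_{\R^d}\varphi_\delta(y)\,(f(x-y)-f_\delta(x))\otimes(f(x-y)-f_\delta(x))\,dy.
\]
For $n=0$ I would bound the right-hand side by Minkowski, split $f(x-y)-f_\delta(x)=(f(x-y)-f(x))+(f(x)-f_\delta(x))$, and observe that for $y\in\operatorname{supp}\varphi_\delta\subset\{|y|\le C\delta\}$ one has $\|f(\cdot-y)-f_\delta(\cdot)\|_{L^{2r}}\le|y|^\theta[f]_{B^\theta_{2r,\infty}}+C\delta^\theta\|f\|_{B^\theta_{2r,\infty}}\le C\delta^\theta\|f\|_{B^\theta_{2r,\infty}}$, the second summand by \eqref{molli:1} with Lebesgue exponent $2r$; since $\|a\otimes a\|_{L^r}=\|a\|^2_{L^{2r}}$, the integral is then $O(\delta^{2\theta}\|f\|^2_{B^\theta_{2r,\infty}})$.

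For $n\ge1$ and $|\alpha|\le n$ one applies $D^\alpha$ to $f_\delta\otimes f_\delta-(f\otimes f)_\delta$ and expands the first summand by the Leibniz rule. The interior terms $D^\beta f_\delta\otimes D^{\alpha-\beta}f_\delta$ with $1\le|\beta|\le|\alpha|-1$ are already $O(\delta^{2\theta-|\alpha|}\|f\|^2_{B^\theta_{2r,\infty}})$ by \eqref{molli:2} applied to each factor. The two boundary terms $f_\delta\otimes D^\alpha f_\delta$ and $D^\alpha f_\delta\otimes f_\delta$ are, separately, only $O(\delta^{\theta-|\alpha|})$ --- the wrong order --- and so must be kept together with $D^\alpha(f\otimes f)_\delta=(f\otimes f)\ast D^\alpha\varphi_\delta$ (the derivatives may be transferred onto $\varphi_\delta$ since $f\otimes f\in L^r$): a short computation exploiting $\int D^\alpha\varphi_\delta=0$ shows
\[
f_\delta\otimes D^\alpha f_\delta+D^\alpha f_\delta\otimes f_\delta-D^\alpha(f\otimes f)_\delta=-\int_{\R^d}(f(x-y)-f_\delta(x))\otimes(f(x-y)-f_\delta(x))\,D^\alpha\varphi_\delta(y)\,dy,
\]
which one estimates exactly as in the case $n=0$, now using $\int|D^\alpha\varphi_\delta|=C\delta^{-|\alpha|}$. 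Collecting all contributions and recalling $|\alpha|\le n$ and $\delta\le1$ gives \eqref{molli:3}.

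I expect the main obstacle to be precisely this cancellation for $n\ge1$: one must resist estimating $D^\alpha(f_\delta\otimes f_\delta)$ term by term, because the boundary Leibniz terms are genuinely of the worse order $\delta^{\theta-n}$, and it is only their cancellation against $D^\alpha(f\otimes f)_\delta$ --- which hinges on moving the derivatives onto the mollifier and on the vanishing of $\int D^\alpha\varphi_\delta$ --- that restores the sharp exponent $\delta^{2\theta-n}$. The rest is routine; the only minor points are the passage of the difference-quotient characterisation of the Besov seminorm from $\R^d$ to $\T^d$ (via the convention of measuring norms on $[0,4]^d$) and the standing restriction $\delta\le1$, neither of which affects the powers of $\delta$.
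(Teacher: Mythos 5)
Your argument is correct, and it is precisely the classical Constantin--E--Titi-type commutator argument: the paper itself offers no proof of this proposition, merely recalling the estimates as classical and citing \cite{CET94} for the third one. Your treatment of the $n\ge 1$ case of \eqref{molli:3} --- keeping the two boundary Leibniz terms together with $D^\alpha(f\otimes f)_\delta$ and exploiting $\int D^\alpha\varphi_\delta=0$ to recover the quadratic gain $\delta^{2\theta-|\alpha|}$ --- is exactly the right (and the standard) way to handle the only genuinely delicate point.
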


\section{Abstract multilinear interpolation}\label{sec:multi}
In this section we provide some estimates for multilinear operators, by means of abstract real interpolation methods. They are the core of the paper, and the proof of \autoref{t:main} relies on them. We start by recalling some definitions and basic facts about interpolation spaces and we refer the reader to the classical monographs \cite{BL,Lun,Tri} for further details.
\\

Let $(X,\|\cdot\|_X)$ and $(Y,\|\cdot\|_Y)$ be two real Banach spaces. The couple $(X,Y)$ is said to be an interpolation  couple if both $X$ and $Y$ are continuously embedded in a topological Hausdorff vector space. 
For any interval $I\subseteq(0,\infty)$ we denote by $L^r_*(I)$ the Lebesgue space of $r$-summable functions with respect to the measure $dt/t.$ Let use notice that in particular $L^\infty(I)=L^\infty_*(I).$ Moreover, we recall the definition of the $K$-function, by introducing the following notation. Given $x\in X+Y$ we denote $\Omega(x)=\{  (a,b)\in X\times Y : a+b=x\}\subset X\times Y.$
\begin{definition}
For every $x\in X+Y$  and $t>0,$ the $K$-function is defined by  
\begin{equation}\label{K:fun}
K(t,x,X,Y)=\inf_{\Omega(x)}\{\|a\|_X+t\|b\|_Y\}.
\end{equation}
If no confusion can occur, we simply write $K(t,x)$ instead of $K(t,x,X,Y).$
\end{definition}
\begin{definition}
Let $\theta\in(0,1)$ and $r\in[1,\infty].$ We set
\[
(X,Y)_{\theta,r}=\left\{x\in X+Y \hbox{ s.t. } t\mapsto t^{-\theta}K(t,x)
\in L^r_*(0,\infty)\right\}
\]
endowed with the norm 
\begin{equation*}
\|x\|_{(X,Y)_{\theta,r}}=\|t^{-\theta}K(\cdot,x)\|_{L^r_*}.
\end{equation*}
\end{definition}
\noindent For these spaces we have the following inclusions 
\begin{equation}\label{interp_inclus}
X\cap Y \hookrightarrow (X,Y)_{\theta,r}\hookrightarrow (X,Y)_{\theta,s}\hookrightarrow X+Y,
\end{equation}
for every $\theta \in (0,1)$ and $r,s\in [1,\infty]$ with $r\leq s$. Moreover if $\gamma\geq \theta$ we also have $(X,Y)_{\gamma,r}\hookrightarrow (X,Y)_{\theta,s}$, for every $r,s\in [1,\infty]$.
\noindent The following two remarks will be useful in the proof of  \autoref{thm:tri}.
\begin{remark}\label{K_fun_new}
When $Y \hookrightarrow X$, the definition of $K$ in \eqref{K:fun} does not change if instead of $\Omega(x)$ we  consider the set $\tilde\Omega(x)=\{(a,b)\in \Omega(x) \hbox { s.t. } \|a\|_{X}\leq \|x\|_{X} \} $; in other words,
\[
K(t,x,X,Y)=\inf_{\Omega(x)}\{\|a\|_X+t\|b\|_Y\}=\inf_{\tilde \Omega(x)}\{\|a\|_X+t\|b\|_Y\}.
\]
Indeed, since $Y\hookrightarrow X$, one can choose $a=x$ and $b=0$ in \eqref{K:fun}, obtaining $K(t,x)\leq\|x\|_{X}  $. On the other hand, we have that 
$\|a\|_X+t\|b\|_Y>\|x\|_{X}$ for all $(a,b)\in \tilde\Omega(x)^c$.
\end{remark}
\begin{remark}\label{restric}
Consider again the case $ Y\hookrightarrow X$. Since $a+b=x$, we have
\[
\| a\|_X+\|b\|_X\leq 2 \|a\|_X +\|x\|_X\leq 3\|x\|_X, \qquad \forall\, {(a,b) \in \tilde \Omega(x)}.
\]
\end{remark}
\noindent It is well known that $\left((X,Y)_{\theta,r},\|\cdot\|_{(X,Y)_{\theta,r}}\right)$ is a Banach space. Furthermore, we recall that a linear operator $T$ behaves nicely with respect to interpolation, i.e. if $T\in \mathcal L(X_1,Y_1)\cap \mathcal L(X_2,Y_2),$  then  $T\in \mathcal L((X_1,X_2)_{\theta,r},(Y_1,Y_2)_{\theta,r})$ for any $\theta\in(0,1)$ and $r\in[1,\infty].$ \\

Instead of linear operators, our aim is to treat the case of multilinear operators, in particular bilinear and trilinear ones.  It is worth mentioning that there exists a wide literature on Interpolation Theory for multilinear operators, see for example the works \cite{BL}, \cite{GM}, \cite{LP} and  \cite{Ma}, but at the best of our knowledge the following results are new. We also emphasise that they are precisely designed for the applications to incompressible fluid models of the next section.  In what follows, a conjugate pair $(s,s^\prime)$ is a couple of reals satisfying $s^\prime=\frac{s}{s-1}.$

\begin{theorem}\label{bil:thm}
Let $(X_1,X_2)$ and $(Y_1,Y_2)$ be two interpolation couples. Let $T$ be a bilinear operator satisfying 
\begin{align}\label{bil:est:1}
\|T(a_1,a_2)\|_{Y_1}&\leq C_0\|a_1\|_{X_1}\|a_2\|_{X_1},\\\label{bil:est:2}
\|T(b_1,b_2)\|_{Y_2}&\leq C_0\|b_1\|_{X_2}\|b_2\|_{X_2},
\end{align}
and
\begin{equation}\label{bil:est:3}
\|T(a,b)\|_{(Y_1,Y_2)_{\frac12,\infty}}+\|T(b,a)\|_{(Y_1,Y_2)_{\frac12,\infty}}\leq C_0\|a\|_{X_1}\|b\|_{X_2},
\end{equation}
for some constant $C_0>0$ independent on $a,a_1,a_2\in X_1$ and $b,b_1,b_2\in X_2,$ where we implicitly assume that $T$ is well defined between the spaces involved in the previous estimates. Then, for any $\theta,\gamma\in(0,1),$  $r,s,s^\prime \in[1,\infty]$ with $s,s^\prime$ being a conjugate pair,
\begin{equation*}
\|T(x_1,x_2)\|_{(Y_1,Y_2)_{\frac{\theta+\gamma}{2},r}}\leq  C_0\|x_1\|_{(X_1,X_2)_{\gamma,rs}} \|x_2\|_{(X_1,X_2)_{\theta,rs^\prime}} \qquad \forall\, x_1 \in (X_1,X_2)_{\gamma,rs}, \,\forall\, x_2 \in (X_1,X_2)_{\theta,rs^\prime}.
\end{equation*}
In particular, for $\gamma=\theta$ and $s=s^\prime=2,$ we get 
\begin{equation*}
\|T(x,x)\|_{(Y_1,Y_2)_{\theta,r}}\leq C_0\|x\|^2_{(X_1,X_2)_{\theta,2r}}, \quad \forall\, x\in(X_1,X_2)_{\theta,2r}.
\end{equation*}
\end{theorem}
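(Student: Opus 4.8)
The plan is to prove the bilinear interpolation estimate by the standard "sum decomposition" technique adapted to the multilinear setting, exploiting the hypotheses \eqref{bil:est:1}--\eqref{bil:est:3} as the three corners of a two-dimensional interpolation scheme. Fix $x_1\in(X_1,X_2)_{\gamma,rs}$ and $x_2\in(X_1,X_2)_{\theta,rs'}$. For each $t>0$ choose near-optimal decompositions $x_1=a_1(t)+b_1(t)$ and $x_2=a_2(t)+b_2(t)$ with $a_i(t)\in X_1$, $b_i(t)\in X_2$, realizing $K(t,x_i,X_1,X_2)$ up to a factor $2$. By bilinearity,
\[
T(x_1,x_2)=T(a_1,a_2)+\big(T(a_1,b_2)+T(b_1,a_2)\big)+T(b_1,b_2),
\]
where the arguments are evaluated at a scale to be chosen. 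The first term lands in $Y_1$ via \eqref{bil:est:1}, the last in $Y_2$ via \eqref{bil:est:2}, and the crucial middle two mixed terms land in the intermediate space $(Y_1,Y_2)_{1/2,\infty}$ via \eqref{bil:est:3}. The strategy is then to feed $T(a_1,b_2)+T(b_1,a_2)$ back into the $K$-functional machinery: an element of $(Y_1,Y_2)_{1/2,\infty}$ can itself be split, at any scale $\tau$, into a piece in $Y_1$ of size $\lesssim \tau^{1/2}\|\cdot\|_{(Y_1,Y_2)_{1/2,\infty}}$ and a piece in $Y_2$ of size $\lesssim \tau^{-1/2}\|\cdot\|_{(Y_1,Y_2)_{1/2,\infty}}$. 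Collecting, one obtains a three-parameter estimate for $K(\lambda, T(x_1,x_2), Y_1,Y_2)$ in terms of $K(t_1,x_1)$, $K(t_2,x_2)$ and the mixed norm, and then optimizes over the internal scales as a function of $\lambda$.

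Concretely, I would aim to establish a pointwise bound of the shape
\[
K\big(\mu\nu, T(x_1,x_2), Y_1,Y_2\big)\;\lesssim\; C_0\,K(\mu,x_1,X_1,X_2)\cdot K(\nu,x_2,X_1,X_2)
\]
for all $\mu,\nu>0$, where the implicit constant is absolute. Assuming this, multiply by $(\mu\nu)^{-(\theta+\gamma)/2}=\mu^{-\gamma/2}\mu^{-(\theta-\gamma)/2+\cdots}$; more precisely write $(\mu\nu)^{-(\theta+\gamma)/2}=\big(\mu^{-\gamma}K(\mu,x_1)\big)^{1/2}$-type factors do not quite match, so instead I would set $\lambda=\mu\nu$ and use that for fixed $\lambda$,
\[
\lambda^{-\frac{\theta+\gamma}{2}}K(\lambda,T(x_1,x_2))\lesssim C_0\,\big(\mu^{-\gamma}K(\mu,x_1)\big)\big(\nu^{-\theta}K(\nu,x_2)\big)
\]
whenever $\mu^{\gamma}\nu^{\theta}=\lambda^{(\theta+\gamma)/2}$, say by choosing $\mu=\lambda^{1/2}$, $\nu=\lambda^{1/2}$ when $\gamma=\theta$ and more generally $\mu=\lambda^{\alpha}$, $\nu=\lambda^{1-\alpha}$ with $\alpha$ tuned so that $\alpha\gamma+(1-\alpha)\theta=(\theta+\gamma)/2$, i.e. $\alpha=1/2$. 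Thus with $\mu=\nu=\lambda^{1/2}$ one gets
\[
\lambda^{-\frac{\theta+\gamma}{2}}K(\lambda,T(x_1,x_2))\lesssim C_0\,\big(\lambda^{-\gamma/2}K(\lambda^{1/2},x_1)\big)\big(\lambda^{-\theta/2}K(\lambda^{1/2},x_2)\big).
\]
Taking the $L^r_*(0,\infty)$ norm in $\lambda$ and applying H\"older's inequality with the conjugate pair $(s,s')$ to split the product, together with the substitution $\lambda=t^2$ (which only changes $L^r_*$ norms by a harmless constant, since $d\lambda/\lambda=2\,dt/t$), yields exactly
\[
\|T(x_1,x_2)\|_{(Y_1,Y_2)_{\frac{\theta+\gamma}{2},r}}\lesssim C_0\,\|x_1\|_{(X_1,X_2)_{\gamma,rs}}\,\|x_2\|_{(X_1,X_2)_{\theta,rs'}}.
\]
The final "in particular" assertion is the case $\gamma=\theta$, $s=s'=2$, applied with $x_1=x_2=x$.

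The main obstacle is proving the pointwise $K$-functional bound, i.e. controlling $K(\mu\nu,T(x_1,x_2),Y_1,Y_2)$. The cleanest route uses Remarks~\ref{K_fun_new} and~\ref{restric}: for the spaces $X_1,X_2$ one cannot assume $X_2\hookrightarrow X_1$ in general, so instead I would keep the decompositions $x_i=a_i+b_i$ and estimate each of the four terms $T(a_1,a_2),T(a_1,b_2),T(b_1,a_2),T(b_1,b_2)$ separately, combining two of them in $Y_1$, two in $Y_2$, and the cross-terms through $(Y_1,Y_2)_{1/2,\infty}$, then sub-dividing the latter once more. The bookkeeping — making sure the internal scale $\tau$ used to re-split the $(Y_1,Y_2)_{1/2,\infty}$ piece is chosen as a function of $\mu,\nu$ so that all error terms balance to give the clean product $K(\mu,x_1)K(\nu,x_2)$ with an absolute constant — is where care is needed; the natural choice turns out to be $\tau=\mu\nu$, exploiting that the exponent $1/2$ in \eqref{bil:est:3} sits exactly halfway and so the mixed terms scale like the geometric mean. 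Once the pointwise bound is in hand, everything else is a change of variables and one application of H\"older's inequality.
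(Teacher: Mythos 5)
Your proposal is correct and follows essentially the same route as the paper: decompose $x_i=a_i+b_i$, send $T(a_1,a_2)$ to $Y_1$, $T(b_1,b_2)$ to $Y_2$, re-split the mixed terms $T(a_1,b_2)$, $T(b_1,a_2)$ at internal scale $t$ using their membership in $(Y_1,Y_2)_{\frac12,\infty}$ from \eqref{bil:est:3}, arrive at $K(t,T(x_1,x_2),Y_1,Y_2)\leq C_0\,K(\sqrt t,x_1)K(\sqrt t,x_2)$, and finish with the weight $t^{-(\theta+\gamma)/2}$, the H\"older inequality with the conjugate pair $(s,s')$ in $L^r_*$, and the change of variables $\lambda=t^2$. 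The only caveat is that your intermediate two-parameter target $K(\mu\nu,T(x_1,x_2))\lesssim C_0K(\mu,x_1)K(\nu,x_2)$ is not actually attainable for $\mu\neq\nu$ (the exponent $\tfrac12$ in \eqref{bil:est:3} only yields the geometric mean $\sqrt{\mu\nu}$ in the cross terms), but this is harmless since you ultimately use only the diagonal case $\mu=\nu=\lambda^{1/2}$, which is exactly the paper's estimate.
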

\begin{proof}
Let $x_1\in (X_1,X_2)_{\gamma,sr}$ and $x_2\in (X_1,X_2)_{\theta,rs^\prime}.$ Then we can write $x_1=a_1+b_1$ and $x_2=a_2+b_2$ for some $a_1,a_2\in X_1$ and $b_1,b_2\in X_2,$ by definition. Since $T$ is bilinear we have 
\[
T(x_1,x_2)=T(a_1,a_2)+T(a_1,b_2)+T(b_1,a_2)+T(b_1,b_2).
\]
From \eqref{bil:est:3} we know that $T(a_1,b_2)\in(Y_1,Y_2)_{\frac12,\infty},$ hence for any $t, \eps>0$ there exist $T_1\in Y_1$ and  $T_2\in Y_2$ 
such that $T(a_1,b_2)=T_1+T_2$ and
\begin{equation}\label{bil:est:4}
\begin{aligned}
\|T_1\|_{Y_1}+t\|T_2\|_{Y_2}&\leq (1+\eps)K(t,T(a_1,b_2),Y_1,Y_2)\\
&\leq (1+\varepsilon)\sqrt t\|T(a_1,b_2)\|_{(Y_1,Y_2)_{\frac12,\infty}}\leq (1+\eps)C_0\sqrt t\|a_1\|_{X_1}\|b_2\|_{X_2}.
\end{aligned}
\end{equation}
Similarly, we can decompose $T(b_1,a_2)=U_1+U_2$ with $U_1\in Y_1$ and  $U_2\in Y_2$ with estimate 
\begin{equation}\label{bil:est:5}
\|U_1\|_{Y_1}+t\|U_2\|_{Y_2}\leq (1+\varepsilon) C_0\sqrt t\|a_1\|_{X_1}\|b_2\|_{X_2}.
\end{equation}
Therefore we can write $T(x_1,x_2)=V+W$, where 
\[
\begin{aligned}
V&=T(a_1,a_2)+T_1+U_1\in Y_1,\\
W&=T(b_1,b_2)+T_2+U_2\in Y_2.
\end{aligned}
\]
Summing up \eqref{bil:est:1}
--\eqref{bil:est:5} yields to 
\[
\begin{aligned}
\|V\|_{Y_1}+t\|W\|_{Y_2}&\leq (1+\eps)C_0\left(\|a_1\|_{X_1}\|a_2\|_{X_1}+\sqrt t\left(\|a_1\|_{X_1}\|b_2\|_{X_2}+\|a_2\|_{X_1}\|b_1\|_{X_2}\right)+t\|b_1\|_{X_2}\|b_2\|_{X_2}\right)\\
&=(1+\eps)C_0\left(\|a_1\|_{X_1}+\sqrt t\|b_1\|_{X_2}\right)\left(\|a_2\|_{X_1}+\sqrt t\|b_2\|_{X_2}\right),
\end{aligned}
\]
which in turn implies 
\begin{equation}\label{bil:est:6}
K(t,T(x_1,x_2),Y_1,Y_2)\leq (1+\eps)C_0K(\sqrt t,x_1,X_1,X_2)K(\sqrt t,x_2,X_1,X_2).
\end{equation}
Multiplying \eqref{bil:est:6} by $t^{-(\gamma+\theta)/2}$ and by taking the $L^r_*(0,\infty)$-norm we get, by means of the H\"older inequality with conjugate exponents $s$ and $s^\prime,$ 
\[
\begin{aligned}
\|T(x_1,x_2)\|_{(Y_1,Y_2)_{\frac{\theta+\gamma}{2},r}} &=
\|(\cdot)^{-(\theta+\gamma)/2}K(\cdot,T(x_1,x_2))\|_{L^r_*}
\\& \leq (1+\eps)C_0\left(\|(\cdot)^{-s\theta/2}K^s(\sqrt\cdot,x_1)\|^{1/s}_{L^r_*}\|(\cdot)^{-s^\prime\theta/2}K^{s^\prime}(\sqrt\cdot,x_2)\|^{1/{s^\prime}}_{L^r_*}\right)\\
&=(1+\eps)C_0\|x_1\|_{(X_1,X_2)_{\gamma,rs}}\|x_2\|_{(X_1,X_2)_{\theta,rs^\prime}},
\end{aligned}
\]
and since the last inequality holds true for any $\varepsilon>0$, we are done.
\end{proof}
Let us now focus on trilinear operators, for which a similar result as in \autoref{bil:thm} can be proved. In what follows, it will be useful to consider interpolation couples $(X_1, X_2)$ such that $X_2\hookrightarrow X_1.$ For  sake of  clarity, we require that the trilinear operator in the statement is symmetric in each variable, even though a suitable adaptation would work without this requirement. 
\begin{theorem}\label{thm:tri}
Let $C_0>0$, $(X_1,X_2)$ and $(Y_1,Y_2)$ be two interpolation couples with $X_2\hookrightarrow X_1.$ Let $T$ be a trilinear and symmetric operator satisfying the following conditions
\begin{equation}\label{tri:est:1}
\|T(a_1,a_2,a_3)\|_{Y_1}\leq C_0\|a_1\|_{X_1}\|a_2\|_{X_1}\|a_3\|_{X_1},
\end{equation}
\begin{equation}\label{tri:est:2}
\|T(b_1,b_2,b_3)\|_{Y_2}\leq C_0\Big(\|b_1\|_{X_1}\|b_2\|_{X_2}\|b_3\|_{X_2}+\|b_1\|_{X_2}\|b_2\|_{X_1}\|b_3\|_{X_2}+\|b_1\|_{X_2}\|b_2\|_{X_2}\|b_3\|_{X_1}\Big),
\end{equation}
and
\begin{equation}\label{tri:est:3}
\|T(a_1,b_2,b_3)\|_{(Y_1,Y_2)_{\frac12,\infty}}\leq C_0\|a_1\|_{X_1}\Big(\|b_2\|_{X_2}\|b_3\|_{X_1}+\|b_2\|_{X_1}\|b_3\|_{X_2}\Big),
\end{equation}
 where we implicitly assume that $T$ is well defined between the spaces involved in the previous estimates. Then for any $\gamma,\theta\in(0,1)$ and $r,s\in[1,\infty],$ 
for every $x_1,x_2,x_3 $ we have
\begin{equation}\label{eqn:ts-abstr-interp}
\begin{aligned}
\|T(x_1,x_2,x_3)\|_{(Y_1,Y_2)_{\frac{\theta+\gamma}{2},r}}&\leq 3 C_0\bigg(\|x_1\|_{X_1}\|x_2\|_{(X_1,X_2)_{\gamma,rs}}\|x_3\|_{(X_1,X_2)_{\theta,rs^\prime}}\\
&\quad+\|x_1\|_{(X_1,X_2)_{\gamma,rs}} \left(\|x_2\|_{X_1}\|x_3\|_{(X_1,X_2)_{\theta,rs^\prime}}+\|x_2\|_{(X_1,X_2)_{\theta,rs^\prime}}\|x_3\|_{X_1} \right) \bigg).
\end{aligned}
\end{equation}
In particular, for $\gamma=\theta$ and $s=s^\prime=2,$ we get 
\begin{equation*}
\|T(x,x,x)\|_{(Y_1,Y_2)_{\theta,r}}\leq 3 C_0\|x\|_{X_1}\|x\|^2_{(X_1,X_2)_{\theta,2r}}, \quad \forall x\,\in(X_1,X_2)_{\theta,2r}.
\end{equation*}

\end{theorem}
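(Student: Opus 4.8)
The plan is to mimic the structure of the proof of \autoref{bil:thm}, but with the extra bookkeeping forced by the asymmetry between $X_1$ and $X_2$ in \eqref{tri:est:1}--\eqref{tri:est:3}. First I would fix $x_1,x_2,x_3$ in the appropriate interpolation spaces and, using the $K$-functional, write each $x_i$ as a sum $a_i+b_i$ with $a_i\in X_1$, $b_i\in X_2$. Here is where \autoref{K_fun_new} and \autoref{restric} enter: since $X_2\hookrightarrow X_1$, I may assume the decompositions are \emph{restricted}, i.e. $\|a_i\|_{X_1}\lesssim\|x_i\|_{X_1}$ and $\|b_i\|_{X_1}\lesssim\|x_i\|_{X_1}$, with near-optimal control $\|a_i\|_{X_1}+t\|b_i\|_{X_2}\le(1+\eps)K(t,x_i,X_1,X_2)$. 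This is the feature that makes the trilinear estimate close: the ``mixed'' terms in \eqref{tri:est:2} and \eqref{tri:est:3} need $X_1$-norms of the $b$'s, which we would otherwise have no bound for.

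Next I would expand $T(x_1,x_2,x_3)$ by trilinearity into the eight terms $T(z_1,z_2,z_3)$ with $z_i\in\{a_i,b_i\}$, and sort them by the number of $b$'s. The all-$a$ term goes into $Y_1$ via \eqref{tri:est:1}. The all-$b$ term goes into $Y_2$ via \eqref{tri:est:2}, after replacing the stray $\|b_i\|_{X_1}$ factors by $\|x_i\|_{X_1}$ using the restricted decomposition. The three terms with exactly two $b$'s are the ones controlled by \eqref{tri:est:3}: each lies in $(Y_1,Y_2)_{1/2,\infty}$, so for the running parameter $t$ I split it as $P_j+Q_j$ with $\|P_j\|_{Y_1}+t\|Q_j\|_{Y_2}\le(1+\eps)\sqrt t\,\|T(\cdot)\|_{(Y_1,Y_2)_{1/2,\infty}}$, and bound the right side by \eqref{tri:est:3} — again pushing one $\|b\|_{X_1}$ down to $\|x\|_{X_1}$. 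The three terms with exactly one $b$ are handled by symmetry using \eqref{tri:est:3} as well (a term like $T(a_1,a_2,b_3)=T(b_3,a_1,a_2)$ fits the template of \eqref{tri:est:3} with the roles relabelled, since $a$'s are also in $X_1$ and controlled by $\|x\|_{X_1}$); these also get split into a $Y_1$ and a $Y_2$ piece. Collecting the $Y_1$-pieces into $V$ and the $Y_2$-pieces into $W$, I get $T(x_1,x_2,x_3)=V+W$ with
\[
\|V\|_{Y_1}+t\|W\|_{Y_2}\lesssim C_0\Big(\|x_1\|_{X_1}\,\mathcal K_2(\sqrt t)\,\mathcal K_3(\sqrt t)+\mathcal K_1(\sqrt t)\big(\|x_2\|_{X_1}\mathcal K_3(\sqrt t)+\mathcal K_2(\sqrt t)\|x_3\|_{X_1}\big)\Big),
\]
where $\mathcal K_i(\tau)=K(\tau,x_i,X_1,X_2)$; hence, taking the infimum on the left,
\[
K(t,T(x_1,x_2,x_3),Y_1,Y_2)\lesssim C_0\Big(\|x_1\|_{X_1}\mathcal K_2(\sqrt t)\mathcal K_3(\sqrt t)+\mathcal K_1(\sqrt t)\big(\|x_2\|_{X_1}\mathcal K_3(\sqrt t)+\mathcal K_2(\sqrt t)\|x_3\|_{X_1}\big)\Big).
\]

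Finally I would multiply this pointwise inequality by $t^{-(\theta+\gamma)/2}$ and take the $L^r_*(0,\infty)$ norm. On each of the three product terms I distribute the weight as $t^{-\theta/2}\cdot t^{-\gamma/2}$ (the $\|x_i\|_{X_1}$ factors carry no weight) and apply H\"older in $L^r_*$ with exponents $s,s'$, exactly as in \autoref{bil:thm}, after the change of variables $\tau=\sqrt t$ which turns $\|t^{-\sigma}\mathcal K_i(\sqrt t)^{\,\cdot}\|_{L^r_*}$ into a constant multiple of $\|x_i\|_{(X_1,X_2)_{2\sigma,\,\cdot}}$-type quantities; pairing $\gamma$ with the $rs$-slot and $\theta$ with the $rs'$-slot reproduces the three terms on the right-hand side of \eqref{eqn:ts-abstr-interp}. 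The constant $3C_0$ comes from the three structurally identical groups of terms, and letting $\eps\to0$ finishes the proof; the special case $\gamma=\theta$, $s=s'=2$ is immediate.

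The main obstacle — and the reason the hypotheses are shaped the way they are — is the control of the ``stray'' $X_1$-norms of the $b_i$'s appearing in \eqref{tri:est:2} and \eqref{tri:est:3}: without the observation that the $K$-functional is unchanged by restricting to decompositions with $\|a_i\|_{X_1},\|b_i\|_{X_1}\lesssim\|x_i\|_{X_1}$ (\autoref{K_fun_new} and \autoref{restric}, which crucially use $X_2\hookrightarrow X_1$), those terms could not be absorbed into the product of $K$-functionals and the argument would not close. Everything else is the bookkeeping of splitting eight trilinear terms, reusing \eqref{tri:est:3} via symmetry for the one-$b$ terms, and the by-now-standard H\"older-in-$L^r_*$ step.
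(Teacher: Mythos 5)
Your overall architecture matches the paper's proof: decompose each $x_k=a_k+b_k$, expand $T$ by trilinearity into eight terms sorted by the number of $b$'s, use \eqref{tri:est:1} for the all-$a$ term, \eqref{tri:est:2} for the all-$b$ term, \eqref{tri:est:3} together with a near-optimal splitting in $(Y_1,Y_2)_{\frac12,\infty}$ for the two-$b$ terms, control the stray $X_1$-norms of the $b_k$'s by $\|x_k\|_{X_1}$ via the restricted decompositions of \autoref{K_fun_new} and \autoref{restric}, and finish with the weighted $L^r_*$-norm and H\"older with exponents $s,s'$. That is exactly the intended argument, and your identification of the role of $X_2\hookrightarrow X_1$ is correct.

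The one step that does not work as written is your treatment of the three terms with exactly one $b$. You propose to bound, say, $T(a_1,a_2,b_3)=T(b_3,a_1,a_2)$ by \eqref{tri:est:3} ``with the roles relabelled, since $a$'s are also in $X_1$.'' But in \eqref{tri:est:3} the last two slots must be occupied by elements of $X_2$: the right-hand side contains $\|b_2\|_{X_2}$ and $\|b_3\|_{X_2}$, and the implicit domain assumption is that $T$ maps $X_1\times X_2\times X_2$ into $(Y_1,Y_2)_{\frac12,\infty}$. In $T(b_3,a_1,a_2)$ two of the three arguments, namely $a_1$ and $a_2$, are only known to lie in $X_1$, so their $X_2$-norms may be infinite and \eqref{tri:est:3} gives nothing. (Even if it applied, the splitting would produce a factor $\sqrt t$ and an $X_2$-norm of some $a_i$, which has no place in the product of $K$-functionals you need at the end.) The correct, and simpler, treatment --- the one the paper uses --- is to put the one-$b$ terms into $Y_1$ via \eqref{tri:est:1}: since $X_2\hookrightarrow X_1$ one has $b_3\in X_1$ and $\|T(a_1,a_2,b_3)\|_{Y_1}\le C_0\|a_1\|_{X_1}\|a_2\|_{X_1}\|b_3\|_{X_1}$, with $\|b_3\|_{X_1}\le 3\|x_3\|_{X_1}$ on the restricted decomposition. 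These terms then enter the $K$-functional bound with no factor of $\sqrt t$, exactly as required to absorb them into the factorization $\big(\|a_1\|_{X_1}+\sqrt t\|b_1\|_{X_2}\big)\big(\|a_2\|_{X_1}+\sqrt t\|b_2\|_{X_2}\big)\big(\|a_3\|_{X_1}+\|b_3\|_{X_1}\big)$ and its two permutations. With this correction your proof closes and coincides with the paper's.
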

\begin{proof}
We assume without loss of generality that $\theta\geq\gamma.$ Consider $x_1\in(X_1,X_2)_{\gamma,rs}$ and $x_2,x_3\in(X_1,X_2)_{\theta,rs^\prime}.$ For $k=1,2,3$ we write $x_k=a_k+b_k$ with $a_k\in X_1$ and $b_k\in X_2;$ therefore we expand
\[
T(x_1,x_2,x_3)=U+V+W
\]  
where 
\begin{align*}
U&=T(a_1,a_2,a_3)+T(b_1,a_2,a_3)+T(a_1,b_2,a_3)+T(a_1,a_2,b_3),\\
V&=T(b_1,b_2,a_3)+T(b_1,a_2,b_3)+T(a_1,b_2,b_3),\\
W&=T(b_1,b_2,b_3).
\end{align*}
Since $X_2\hookrightarrow X_1$ we have that $b_k\in X_1$ for any $k=1,2,3,$ then by \eqref{tri:est:1} we can control $U$ as 
\begin{equation}\label{tri:est:4}
\begin{aligned}
\|U\|_{Y_1}& \leq C_0\Big(\|a_1\|_{X_1}\|a_2\|_{X_1}\|a_3\|_{X_1}+\|b_1\|_{X_1}\|a_2\|_{X_1}\|a_3\|_{X_1}\\
&\quad+\|a_1\|_{X_1}\|b_2\|_{X_1}\|a_3\|_{X_1}+\|a_1\|_{X_1}\|a_2\|_{X_1}\|b_3\|_{X_1} \Big).
\end{aligned}
\end{equation}
The symmetry of the operator $T$ and \eqref{tri:est:3} imply that every term defining  $V$ belongs to $(Y_1,Y_2)_{\frac12,\infty}.$ Let us consider without loss of generality the term $T(b_1,b_2,a_3);$ as already done in \autoref{bil:thm}, for any $t,\varepsilon >0$ there exist $T_1\in Y_1$ and $T_2\in Y_2$ such that $T(b_1,b_2,a_3)=T_1+T_2$ and 
\[
\begin{aligned}
\|T_1\|_{Y_1}+t\|T_2\|_{Y_2}&\leq (1+\varepsilon)K(t,T(b_1,b_2,a_3),Y_1,Y_2)\leq (1+\varepsilon)\sqrt t\|T(b_1,b_2,a_3)\|_{(Y_1,Y_2)_{\frac12,\infty}}\\
&\leq (1+\varepsilon)C_0\sqrt t\|a_3\|_{X_1}\left(\|b_1\|_{X_1}\|b_2\|_{X_2}+\|b_1\|_{X_2}\|b_2\|_{X_1}\right).
\end{aligned}
\]
We point out that the elements $T_1$ and $T_2$ actually depend on $a_3,b_1,b_2, \varepsilon$ and $t$ as well.
The same consideration for the other two terms defining $V$ yields, for any $t,\varepsilon>0,$ to the existence of $V_1\in Y_1$ and $V_2\in Y_2$ such that $V=V_1+V_2$ and 
\begin{equation}\label{tri:est:5}
\begin{aligned}
\|V_1\|_{Y_1}&+t\|V_2\|_{Y_2}\leq (1+\varepsilon)C_0\sqrt t\Big(\|a_1\|_{X_1}\big(\|b_2\|_{X_1}\|b_3\|_{X_2}+\|b_2\|_{X_2}\|b_3\|_{X_1}\big)\\
&\quad+\|a_2\|_{X_1}\big(\|b_1\|_{X_1}\|b_3\|_{X_2}+\|b_1\|_{X_2}\|b_3\|_{X_1}\big)+\|a_3\|_{X_1}\big(\|b_1\|_{X_1}\|b_2\|_{X_2}+\|b_1\|_{X_2}\|b_2\|_{X_1}\big)\Big).
\end{aligned}
\end{equation}

\noindent By using \eqref{tri:est:2} we also get 

\begin{equation}\label{tri:est:6}
\begin{aligned}
\|T(b_1,b_2,b_3)\|_{Y_2}\leq C_0\Big(\|b_1\|_{X_1}\|b_2\|_{X_2}\|b_3\|_{X_2}+\|b_1\|_{X_2}\|b_2\|_{X_1}\|b_3\|_{X_2}+\|b_1\|_{X_2}\|b_2\|_{X_2}\|b_3\|_{X_1}\Big).
\end{aligned}
\end{equation}
By combining \eqref{tri:est:4}, \eqref{tri:est:5} and \eqref{tri:est:6} we obtain, for any $t,\varepsilon>0,$ a decomposition of $T(x_1,x_2,x_3)=(U+V_1)+(V_2+W),$ with $U+V_1\in Y_1$ and $V_2+W\in Y_2$ such that
\begin{equation*}
\begin{aligned}
\|U+V_1&\|_{Y_1}+t\|V_2+W\|_{Y_2}\leq (1+\varepsilon)C_0\bigg( \|a_1\|_{X_1}\|a_2\|_{X_1}\|a_3\|_{X_1}+\|b_1\|_{X_1}\|a_2\|_{X_1}\|a_3\|_{X_1}\\
&\quad+\|a_1\|_{X_1}\|b_2\|_{X_1}\|a_3\|_{X_1}+\|a_1\|_{X_1}\|a_2\|_{X_1}\|b_3\|_{X_1}+\sqrt t\Big(\|a_1\|_{X_1}\big(\|b_2\|_{X_1}\|b_3\|_{X_2}+\|b_2\|_{X_2}\|b_3\|_{X_1}\big)\\
&\quad+\|a_2\|_{X_1}\big(\|b_1\|_{X_1}\|b_3\|_{X_2}+\|b_1\|_{X_2}\|b_3\|_{X_1}\big)+\|a_3\|_{X_1}\big(\|b_1\|_{X_1}\|b_2\|_{X_2}+\|b_1\|_{X_2}\|b_2\|_{X_1}\big)\Big)\\
&\quad+t\Big(\|b_1\|_{X_1}\|b_2\|_{X_2}\|b_3\|_{X_2}+\|b_1\|_{X_2}\|b_2\|_{X_1}\|b_3\|_{X_2}+\|b_1\|_{X_2}\|b_2\|_{X_2}\|b_3\|_{X_1}\Big) \bigg)\\
&\leq (1+\varepsilon)C_0\Big( \big(\|a_1\|_{X_1}+\|b_1\|_{X_1}\big)\big(\|a_2\|_{X_1}+\sqrt t \|b_2\|_{X_2}\big)\big(\|a_3\|_{X_1}+\sqrt t \|b_3\|_{X_2}\big)\\
&\quad+\big(\|a_1\|_{X_1}+\sqrt t\|b_1\|_{X_2}\big)\big(\|a_2\|_{X_1}+ \|b_2\|_{X_1}\big)\big(\|a_3\|_{X_1}+\sqrt t \|b_3\|_{X_2}\big)\\
&\quad+\big(\|a_1\|_{X_1}+\sqrt t\|b_1\|_{X_2}\big)\big(\|a_2\|_{X_1}+\sqrt t \|b_2\|_{X_2}	\big)\big(\|a_3\|_{X_1}+ \|b_3\|_{X_1}\big)\Big):=R(t)
\end{aligned}
\end{equation*}
which clearly implies 
\begin{equation}\label{tri:est:7}
K(t,T(x_1,x_2,x_3),Y_1,Y_2)\leq R(t).
\end{equation}
Now,  by using  \autoref{K_fun_new} and \autoref{restric} and by taking  the infima over all the sets $\tilde \Omega(x_k)=\{(a_k,b_k)\in \Omega(x_k) \hbox { s.t. } \|a_k\|_{X_1}\leq \|x_k\|_{X_1} \} $  for $k=1,2,3$ in the right-hand side of \eqref{tri:est:7}, we achieve 
\[
\begin{aligned}
K(t,T(x_1,x_2,x_3),Y_1,Y_2)&\leq 3(1+\varepsilon)C_0\left( \|x_1\|_{X_1} K(\sqrt t, x_2) K(\sqrt t, x_3) \right.\\
&\quad+\left.K(\sqrt t, x_1)\left( \|x_2\|_{X_1} K(\sqrt t, x_3)+\|x_3\|_{X_1}K(\sqrt t, x_2)\right)\right).
\end{aligned}
\]
Multiplying by $t^{-(\theta+\gamma)/2}$  the last inequality, taking the $L^r_*(0,\infty)$-norm and using the H\"older inequality with $s,s^\prime$ as conjugate pair, we obtain \eqref{eqn:ts-abstr-interp} by letting $\varepsilon\rightarrow 0$.
\end{proof}

We recall that interpolation theory also provides the following useful characterization of Besov spaces (see for instance  \cite[Theorem 6.2.4]{BL}).

\begin{prop}\label{interp_sob_bes} Let $\Omega\subseteq\R^d$ be a Lipschitz open set. For any $\theta\in(0,1),$ $r,s\in[1,\infty]$ and $\sigma_1\neq \sigma_2\in \Z $,
\begin{equation}
\label{eqn:interp-sob-bes}
\left(W^{\sigma_1,r}(\Omega),W^{\sigma_2,r}(\Omega)\right)_{\theta,s}=B^{(1-\theta) \sigma_1+\theta \sigma_2}_{r,s}(\Omega).
\end{equation}
Moreover, the same holds if we restrict all spaces in \eqref{eqn:interp-sob-bes} to the linear subspace of divergence-free vector fields. 
\end{prop}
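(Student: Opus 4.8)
The first identity \eqref{eqn:interp-sob-bes} is classical and I would simply invoke it: on the whole space it is \cite[Theorem 6.2.4]{BL} (see also \cite{Lun,Tri}), and the case of a Lipschitz open set $\Omega$ reduces to $\R^d$ by a standard retraction argument --- a bounded extension operator $E\colon W^{\sigma,r}(\Omega)\to W^{\sigma,r}(\R^d)$, simultaneously bounded for $\sigma=\sigma_1,\sigma_2$, together with the restriction $R$ satisfies $RE=\mathrm{id}$, so that $W^{\sigma_i,r}(\Omega)$ is a retract of $W^{\sigma_i,r}(\R^d)$; since the real interpolation functor preserves retracts, $(W^{\sigma_1,r}(\Omega),W^{\sigma_2,r}(\Omega))_{\theta,s}=R\big(B^{\beta}_{r,s}(\R^d)\big)=B^{\beta}_{r,s}(\Omega)$ with $\beta=(1-\theta)\sigma_1+\theta\sigma_2$. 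So the real content is the divergence-free assertion.

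For that, the plan is to use the elementary fact that real interpolation commutes with a complemented subspace along a \emph{compatible} projection: if $(A_0,A_1)$ is an interpolation couple and $P$ is a linear map on $A_0+A_1$ with $P^2=P$ and $P\in\mathcal L(A_0)\cap\mathcal L(A_1)$, then $P\in\mathcal L\big((A_0,A_1)_{\theta,s}\big)$ and $(PA_0,PA_1)_{\theta,s}=\big\{x\in(A_0,A_1)_{\theta,s}:Px=x\big\}$ with equivalent norms. I would prove this in one line: the inequality $K(t,x;PA_0,PA_1)\ge K(t,x;A_0,A_1)$ is trivial, and applying $P$ to an arbitrary decomposition $x=a_0+a_1$ ($a_i\in A_i$) of a point with $Px=x$ gives $x=Pa_0+Pa_1$ with $\|Pa_i\|_{A_i}\le\|P\|_{\mathcal L(A_i)}\|a_i\|_{A_i}$, hence the reverse inequality up to a constant; this is classical retraction theory, see \cite{BL,Tri}.

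I would then apply this with $A_i=W^{\sigma_i,r}(\T^d;\R^d)$ --- the case needed for \autoref{t:main}, in which $r\in(1,\infty)$ --- and with $P=\mathbb{P}$ the Leray--Helmholtz projector onto divergence-free fields, which on $\T^d$ is the Fourier multiplier with matrix symbol $\delta_{jk}-\xi_j\xi_k/|\xi|^2$ on $\xi\neq0$ and the identity on the zero Fourier mode. This symbol is a Mikhlin multiplier, so $\mathbb{P}\in\mathcal L\big(W^{\sigma,r}(\T^d)\big)$ for every integer $\sigma$ and $r\in(1,\infty)$; moreover $\mathbb{P}^2=\mathbb{P}$ and $\mathbb{P}f=f$ if and only if $\xi\cdot\widehat f(\xi)\equiv0$, i.e. if and only if $\diver f=0$ in $\mathcal D'$. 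Hence $\mathbb{P}W^{\sigma,r}(\T^d)=\{f\in W^{\sigma,r}(\T^d):\diver f=0\}$ and likewise $\mathbb{P}B^{\beta}_{r,s}(\T^d)$ is the divergence-free subspace of $B^{\beta}_{r,s}(\T^d)$, so the displayed complemented-subspace identity combined with \eqref{eqn:interp-sob-bes} gives precisely the divergence-free version. For a general Lipschitz $\Omega$ one runs the same scheme after replacing ``$\mathbb{P}$ on $\Omega$'' by the composition of a bounded divergence-free extension $W^{\sigma,r}(\Omega;\R^d)_{\diver=0}\to W^{\sigma,r}(\R^d;\R^d)_{\diver=0}$ (built from a vector potential and a standard Sobolev extension), the projector $\mathbb{P}$ on $\R^d$, and the restriction to $\Omega$.

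The main obstacle --- indeed the only step that is not purely formal --- is the compatible boundedness and idempotency of the projector in the scale $W^{\sigma,r}$: on $\T^d$ this is just the Mikhlin multiplier theorem and is routine, while on a general Lipschitz domain it is where the geometry of $\Omega$ genuinely enters and one must instead construct the bounded divergence-free extension operator; everything else is the abstract complemented-subspace lemma applied to the classical Sobolev--Besov interpolation identity.
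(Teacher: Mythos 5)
Your proposal is correct where it matters, but for the only non-classical part of the statement --- the divergence-free assertion --- it takes a genuinely different route from the paper's. The paper, like you, simply cites \cite[Theorem 6.2.4]{BL} for \eqref{eqn:interp-sob-bes} (your retraction argument for a Lipschitz $\Omega$ is the standard way this reduction is justified); for the divergence-free version it introduces no projector, but instead observes that the near-optimal decompositions $f=a+b$ realizing the $K$-functional in the proof of the classical theorem are produced by mollification at a suitable scale, and convolution commutes with $\diver$, so a divergence-free $f$ admits divergence-free competitors and the constrained and unconstrained $K$-functionals are comparable. Your complemented-subspace argument via the Leray projector $\mathbb{P}$ is a clean alternative that avoids re-opening the proof of the cited theorem, and the abstract lemma you state (a compatible bounded projection commutes with the real method) is correct as written. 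The trade-offs: your route needs $\mathbb{P}$ bounded on both endpoint spaces, hence $r\in(1,\infty)$, whereas the proposition is stated for $r\in[1,\infty]$ and the paper's mollification remark covers the endpoints as well; this restriction is harmless here, since \autoref{prop:bil_tril} only invokes the divergence-free statement on $\T^3$ with integrability exponents in $(1,\infty)$. The one genuinely delicate point of your approach is the case of a general Lipschitz $\Omega$: a divergence-free extension operator simultaneously bounded on $W^{\sigma_1,r}$ and $W^{\sigma_2,r}$ (with possibly negative $\sigma_i$) is nontrivial to construct and is only sketched in your proposal; but again only the periodic case is used in the paper, where the Fourier-multiplier definition of $\mathbb{P}$ settles everything.
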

Notice that for the sake of simplicity, we did not define, in \autoref{sec:tool}, Besov spaces of order less than or equal to $0$. However, we will apply \autoref{interp_sob_bes} only for the Besov spaces of strictly positive $\theta$. The statement for divergence-free vector fields follows instead from the same proof as \eqref{eqn:interp-sob-bes}, since the construction in the interpolation is based on mollification at a suitable scale, and convolutions preserve the divergence-free structure of the vector fields.

\section{Regularity of Euler equation}\label{sec:euler}
The following result about elliptic equations follows by a direct application of \autoref{bil:thm} and \autoref{thm:tri} of the previous section. The reader can compare the following proposition with \cite[Proposition 3.1]{CD18} obtained for H\"older spaces through estimates on a representation formula for $p$ and $q$.
\begin{prop}\label{prop:bil_tril}
Let $\gamma ,\theta \in (0,1)$ and $r\in (1,\infty)$. Let $u,w,z:\T^3\rightarrow \R^3$ be divergence-free vector fields and let $p,q:\T^3 \rightarrow \R^3$ be the unique $0$-average solutions of 
\begin{align}
-\Delta p&=\diver \diver (u\otimes w),\label{laplace_bil}\\
-\Delta q &= \diver \diver \diver( u \otimes w \otimes z)\label{laplace_tril}.
\end{align}
Then, for any $s \in [1,\infty]$, we have 
\begin{equation}\label{est_p_bil}
\|p\|_{B^{\gamma+\theta}_{r,s}}\leq C \|u\|_{B^{\gamma}_{2r,2s}} \|w\|_{B^{\theta}_{2r,2s}}.
\end{equation}
Furthermore, if $\theta +\gamma> 1$ 
\begin{align}\label{est_q_tril}
\|q\|_{B^{\gamma+\theta-1}_{r,s}}\leq C\bigg( \|u\|_{L^{3r}} \|w\|_{B^{\gamma}_{3r,2s}}\|z\|_{B^{\theta}_{3r,2s}} +\|u\|_{B^{\gamma}_{3r,2s}}\Big( \|w\|_{L^{3r}}\|z\|_{B^{\theta}_{3r,2s}}+\|w\|_{B^{\theta}_{3r,2s}}\|z\|_{L^{3r}} \Big) \bigg).
\end{align}
\end{prop}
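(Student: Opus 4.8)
The plan is to obtain both \eqref{est_p_bil} and \eqref{est_q_tril} by specializing the abstract interpolation results \autoref{bil:thm} and \autoref{thm:tri} to the concrete setting of divergence-free vector fields on $\T^3$ and the solution operators of \eqref{laplace_bil}--\eqref{laplace_tril}. The natural choice of interpolation couples is to take $X_1$ and $X_2$ to be the divergence-free parts of Sobolev spaces $W^{\sigma_1,2r}(\T^3)$ and $W^{\sigma_2,2r}(\T^3)$ (respectively $W^{\sigma_i,3r}$ in the trilinear case), for integers $\sigma_1 < \sigma_2$ cleverly chosen, and to take $(Y_1, Y_2)$ as appropriate $W^{\cdot,r}$ couples; then \autoref{interp_sob_bes} identifies the resulting interpolation spaces with exactly the Besov spaces appearing in the statement. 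Concretely, for the bilinear estimate I would define $T(u,w)$ to be the $0$-average solution $p$ of $-\Delta p = \diver\diver(u\otimes w)$; the key point is that $T$ is a bounded bilinear map on several pairs of integer Sobolev spaces, because $-\Delta^{-1}\diver\diver$ is a pseudodifferential operator of order $0$ (a combination of Riesz transforms), so it maps $W^{k,r}\to W^{k,r}$ for every $k$, and then H\"older's inequality in the spatial variable gives $\|u\otimes w\|_{W^{k,r}}\lesssim \|u\|_{W^{k,2r}}\|w\|_{W^{k,2r}}$ for nonnegative integers $k$.

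For \eqref{est_p_bil}: I would check the three hypotheses of \autoref{bil:thm} with $X_1 = W^{0,2r}_{\mathrm{df}} = L^{2r}_{\mathrm{df}}$, $X_2 = W^{1,2r}_{\mathrm{df}}$, $Y_1 = L^r$, $Y_2 = W^{2,r}$. Hypotheses \eqref{bil:est:1} and \eqref{bil:est:2} are the $0$-order boundedness of $-\Delta^{-1}\diver\diver$ at regularity levels $0$ and $2$ combined with the product estimate above (for \eqref{bil:est:2} one uses that $W^{1,2r}$ is a multiplicative algebra into $W^{1,r}$ after H\"older, then applies the order-$0$ operator, landing in $W^{2,r}$ — wait, one must be careful: $\diver\diver(u\otimes w)$ has two derivatives, so $-\Delta^{-1}\diver\diver$ composed with two derivatives of a $W^{1,r}\cdot$-type product lands in $W^{2,r}$; I would organize this by writing $T(u,w)$ directly as an order-$0$ operator applied to $u\otimes w$ up to the two divergences, i.e. $p = R(u\otimes w)$ where $R$ has a symbol $\xi_i\xi_j/|\xi|^2$, hence $R: W^{k,r}\to W^{k,r}$ for all integers $k\geq 0$). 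The mixed hypothesis \eqref{bil:est:3} is the genuinely new ingredient: it asks for $\|T(a,b)\|_{(L^r, W^{2,r})_{1/2,\infty}} \lesssim \|a\|_{L^{2r}}\|b\|_{W^{1,2r}}$, and by \autoref{interp_sob_bes} the target space is $B^1_{r,\infty}$; so I need $\|R(a\otimes b)\|_{B^1_{r,\infty}} \lesssim \|a\|_{L^{2r}}\|b\|_{W^{1,2r}}$, which should follow from the order-$0$ boundedness of $R$ from $B^1_{r,\infty}$ to itself together with a product estimate $\|a\otimes b\|_{B^1_{r,\infty}} \lesssim \|a\|_{L^{2r}}\|b\|_{W^{1,2r}}$ (distribute the single derivative: it either lands on $b$, giving $\|a\|_{L^{2r}}\|\nabla b\|_{L^{2r}}$, or one uses the Besov difference quotient with the increment falling on the smoother factor). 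Applying \autoref{bil:thm} with $\gamma, \theta \in (0,1)$ and $s = s' = 2$ then yields $\|T(u,w)\|_{(L^r,W^{2,r})_{(\gamma+\theta)/2, s}} \lesssim \|u\|_{(L^{2r},W^{1,2r})_{\gamma, 2s}}\|w\|_{(L^{2r},W^{1,2r})_{\theta,2s}}$, and \autoref{interp_sob_bes} rewrites this as $\|p\|_{B^{\gamma+\theta}_{r,s}} \lesssim \|u\|_{B^\gamma_{2r,2s}}\|w\|_{B^\theta_{2r,2s}}$, which is exactly \eqref{est_p_bil}.

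For \eqref{est_q_tril}: I would apply \autoref{thm:tri} with $X_1 = L^{3r}_{\mathrm{df}}$, $X_2 = W^{1,3r}_{\mathrm{df}}$, and $Y_1, Y_2$ chosen so that the relevant interpolation target is $B^{\gamma+\theta-1}_{r,s}$ — since the operator $q = \tilde R(u\otimes w\otimes z)$ now carries three derivatives, $\tilde R$ has symbol of the form $\xi_i\xi_j\xi_k/|\xi|^2$, i.e. it is order $1$, so it maps $W^{k,r}\to W^{k-1,r}$; one should take $Y_1 = W^{-1,r}$-type and $Y_2 = W^{1,r}$, or equivalently shift and use the product bound $\|u\otimes w\otimes z\|_{W^{k,r}} \lesssim$ sum of terms with one factor in $W^{k,3r}$ and the other two in $L^{3r}$, which is precisely the structure of hypotheses \eqref{tri:est:1}--\eqref{tri:est:3}. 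The hypothesis \eqref{tri:est:2}, with exactly one $X_1$-factor and two $X_2$-factors, reflects the Leibniz rule applied to $u\otimes w\otimes z$ at regularity one: the single derivative lands on one factor. The mixed hypothesis \eqref{tri:est:3} again requires a Besov product estimate into $B^1_{r,\infty}$. Once all three hypotheses are verified, \autoref{thm:tri} with $s=s'=2$ gives the trilinear bound, and \autoref{interp_sob_bes} (noting $\gamma+\theta-1 > 0$, which is why the restriction $\theta+\gamma>1$ appears) converts it into \eqref{est_q_tril}.

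The main obstacle I anticipate is the bookkeeping of which integer Sobolev couple produces the right Besov target after interpolation, together with verifying the mixed "off-diagonal" hypotheses \eqref{bil:est:3} and \eqref{tri:est:3} at the $B^1_{r,\infty}$ level: these require a careful product estimate in a Besov space rather than a Lebesgue space, where one must track the single derivative carefully and exploit that one factor is measured in $L^{2r}$ (or $L^{3r}$) only. The boundedness of the Calder\'on--Zygmund–type operators $R$, $\tilde R$ on $W^{k,r}$ and on $B^\sigma_{r,\infty}$ for $r\in(1,\infty)$ is classical and I would simply cite it; the divergence-free restriction causes no trouble since these operators (and convolutions) preserve the divergence-free structure, consistently with the last sentence of \autoref{interp_sob_bes}. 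Everything else is a direct substitution into the abstract theorems of \autoref{sec:multi}.
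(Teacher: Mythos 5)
Your overall architecture coincides with the paper's: the same interpolation couples ($X_1=L^{2r}_{\rm div}$, $X_2=W^{1,2r}_{\rm div}$, $Y_1=L^r$, $Y_2=W^{2,r}$ for $p$; and $L^{3r}_{\rm div}$, $W^{1,3r}_{\rm div}$, $W^{-1,r}$, $W^{1,r}$ for $q$), the same appeal to \autoref{bil:thm}, \autoref{thm:tri} and \autoref{interp_sob_bes}. However, your verification of the endpoint hypotheses \eqref{bil:est:2}--\eqref{bil:est:3} and \eqref{tri:est:2}--\eqref{tri:est:3} has a genuine gap: you treat $p=R(u\otimes w)$ with $R=-\Delta^{-1}\diver\diver$ as an order-zero operator and try to conclude from product estimates on $u\otimes w$ alone. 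This cannot work. For \eqref{bil:est:2} you need $\|T(b_1,b_2)\|_{W^{2,r}}\lesssim\|b_1\|_{W^{1,2r}}\|b_2\|_{W^{1,2r}}$, but $b_1\otimes b_2$ lies only in $W^{1,r}$, and an order-zero operator lands you in $W^{1,r}$ -- one derivative short of the required target. For \eqref{bil:est:3}, the product estimate $\|a\otimes b\|_{B^1_{r,\infty}}\lesssim\|a\|_{L^{2r}}\|b\|_{W^{1,2r}}$ that you invoke is false: in the difference quotient $\|(ab)(\cdot+h)-(ab)(\cdot)\|_{L^r}$ the term $(a(\cdot+h)-a(\cdot))\,b$ is only $O(1)$, not $O(|h|)$, when $a$ is merely in $L^{2r}$.

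The missing ingredient -- which is the actual analytic content of the proposition -- is that the solenoidal condition lets you peel derivatives off the right-hand side of \eqref{laplace_bil}: since $\diver u=\diver w=0$, one has $\diver\diver(u\otimes w)=\partial_j(u^i\partial_i w^j)=\partial_j u^i\,\partial_i w^j$. The first form gives $-\Delta p=\diver(F)$ with $F\in L^r$ when $u\in L^{2r}$ and $w\in W^{1,2r}$, hence $p\in W^{1,r}\hookrightarrow B^1_{r,\infty}=(L^r,W^{2,r})_{1/2,\infty}$, which is exactly \eqref{bil:est:3}; the second form gives $-\Delta p\in L^r$ when both factors are in $W^{1,2r}$, hence $p\in W^{2,r}$, which is \eqref{bil:est:2}. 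The analogous identity
$\diver\diver\diver(u\otimes w\otimes z)=\partial_j(\partial_k u^i\partial_i w^jz^k+\partial_k u^i w^j\partial_i z^k)+\partial_i(\partial_j u^i\partial_k w^jz^k+u^i\partial_k w^j\partial_j z^k)$
is what makes \eqref{tri:est:2} and \eqref{tri:est:3} true with targets $W^{1,r}$ and $(W^{-1,r},W^{1,r})_{\frac12,\infty}$; your ``one factor in $W^{1,3r}$, two in $L^{3r}$'' product bound yields at best $q\in W^{-1,r}$ in the configuration required by \eqref{tri:est:2}, since a derivative would have to fall on a factor controlled only in $L^{3r}$. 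The divergence-free hypothesis is therefore not a bookkeeping detail about preserving a subspace under interpolation; it is the source of the derivative gain behind the double regularity of the pressure. With these identities inserted, the rest of your argument goes through as written.
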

\begin{proof} We denote by $W^{1,r}_{\rm div}$ the linear subspace of $W^{1,r}$ made by divergence-free vector fields (and similarly for $B^\theta_{r,s,{\rm div}}$). Let $T(u,w)$ be the operator that for each couple $(u,w)$ associate the unique 0-average solution of \eqref{laplace_bil}. By the Calder\'on-Zygmund theory, we have
\[
\| T(u,w) \|_{L^r} \leq C \|u\|_{L^{2r}} \|w\|_{L^{2r}}.
\]
Moreover since $\diver u =\diver w=0$ the right-hand side of \eqref{laplace_bil} can be rewritten as
\[
\diver \diver (u\otimes w)=\partial^2_{ij}(u^i w^j)=\partial_j( u^i \partial_i w^j)=\partial_j u^i \partial_i w^j,
\]
thus we can use again  Calder\'on-Zygmund  to get
\[
\| T(u,w) \|_{W^{1,r}} \leq C \|u\|_{L^{2r}} \|w\|_{W^{1,2r}}
\]
and 
\[
 \| T(u,w) \|_{W^{2,r}} \leq C \|u\|_{W^{1,2r}} \|w\|_{W^{1,2r}}.
\]
Since, by  \autoref{interp_sob_bes}, we have the embedding $W^{1,r}_{\rm div}\hookrightarrow B^1_{r,\infty,{\rm div}}=(L^r_{\rm div},W^{2,r}_{\rm div})_{\frac{1}{2},\infty}$, we can apply \autoref{bil:thm} with $X_1=L^{2r}_{\rm div}$, $X_2=W^{1,2r}_{\rm div}$, $Y_1=L^r_{\rm div}$, $Y_2=W^{2,r}_{\rm div}$, hence obtaining  \eqref{est_p_bil}. Note that it is important that all the spaces above consist of divergence-free vector fields.

\noindent The proof of \eqref{est_q_tril} follows similarly as a consequence of Calder\'on-Zygmund and  \autoref{thm:tri}, with $X_1=L^{3r}_{\rm div}$, $X_2=W^{1,3r}_{\rm div}$, $Y_1=W^{-1,r}_{\rm div}$ and $Y_2=W^{1,r},_{\rm div}$ once one  notices that the solenoidal nature of $u,w,z$ implies that
\begin{equation*}
\begin{aligned}
\diver \diver \diver (u \otimes w\otimes z)&=\partial^3_{ijk}(u^i w^j z^k)=\partial^2_{ij} (\partial_k u^i w^j z^k)+\partial^2_{ij} (u^i\partial_kw^j z^k)\\
&=\partial_j(\partial_k u^i \partial_i w^j z^k + \partial_k u^i w^j \partial_i z^k) +\partial_i( \partial_ju^i \partial_k w^j z^k + u^i \partial_kw^j \partial_jz^k).\qedhere
\end{aligned}
\end{equation*}
\end{proof}

We consider now a weak  solution $(u,p)$ of the incompressible Euler equations \eqref{E}. Taking the divergence of the first equation in \eqref{E}, using the incompressibility constraint $\diver u=0$, the pressure $p$ solves 
\begin{equation}\label{lapl_p}
-\Delta p=\diver \diver (u\otimes u),
\end{equation}
thus it can be uniquely determined if one imposes that $\int_{\T^3} p(t,x) \, dx=0$,  for any time $t\in (0,T)$. For every $\theta \in (0,1)$ and $r\in (1,\infty)$, a direct application of Calder\'on-Zygmund leads to 
\begin{equation}\label{p_besov}
\|p(t)\|_{B^\theta_{r,\infty}}\leq C \|u(t)\|^2_{B^\theta_{2r,\infty}}.
\end{equation}

\noindent Since our solutions are just weak solutions, we will need to mollify  \eqref{E} in order to justify some computations; moreover, we will use tune the convolution parameter in terms of the time increment $h$ (a similar approach was used for instance in \cite{DKM2007}). By regularizing (in space) the equations \eqref{E}, one gets that the couple $(u_\delta, p_\delta) = (u\ast \varphi_\delta, p\ast \varphi_\delta) $ solves
\begin{equation}\label{E_mol}
\left\{\begin{array}{l}
\partial_t u_\delta + \diver (u_\delta  \otimes u_\delta ) +\nabla p_\delta =\diver R_\delta \\ 
\diver u_\delta  = 0\,
\end{array}\right.,
\end{equation}
where $R_\delta =u_\delta  \otimes u_\delta-(u\otimes u)_\delta$. We can now prove our main theorem.

\begin{proof}[Proof of \autoref{t:main}]
Let $h>0$ be a time increment. When it will help the readability we will also put in the constants $C$ all the norms of $u$ and $p$ which are already known to be finite. We prove the theorem for $s<\infty$, since the case $s=\infty$ is a simple adaptation and it is easier using the identification $B^{\theta}_{\infty,\infty}=C^\theta$.  \\

\noindent {\it Proof of (i)}. Assume that $u\in L^{2s}((0,T);B^\theta_{2r,\infty}(\T^3))$, for some $s\in [1,\infty)$. We split
\begin{equation}\label{split_ut}
\|u(t+h)-u(t)\|_{L^r}\leq \|u(t+h)-u_\delta(t+h)\|_{L^r}+\|u_\delta(t+h)-u_\delta(t)\|_{L^r}+\|u_\delta(t)-u(t)\|_{L^r}.
\end{equation}
Using \eqref{molli:1} we have $\|u_\delta(t)-u(t)\|_{L^r}\leq C\delta^\theta  \|u(t)\|_{B^\theta_{r,\infty}}$ for every $t\in (0,T)$, from which we deduce
\[
\begin{aligned}
\left( \int_0^{T-h} \|u(t+h)-u_\delta(t+h)\|_{L^r}^s \,dt\right)^\frac{1}{s}+\left(  \int_0^{T-h} \|u(t)-u_\delta(t)\|_{L^r}^s \,dt\right)^\frac{1}{s}&\leq C \delta^{\theta} \|u\|_{L^s (B^\theta_{r,\infty})}\\
&\leq  C \delta^{\theta } \|u\|_{L^{2s}(B^\theta_{2r,\infty})}.
\end{aligned}
\]
In the last inequality we used the fact that both the time and spatial domains are bounded. We are left with the second term in the right-hand side of \eqref{split_ut}. Since $u_\delta$ solves \eqref{E_mol}, using also  \eqref{molli:2} and \eqref{p_besov} we get
\begin{equation*}
\begin{split}
\|u_\delta(t+h)-u_\delta(t)\|_{L^r}&\leq \int_t^{t+h}\|\partial_t u_\delta(\tau)\|_{L^r} \,d\tau \leq \int_t^{t+h} \Big( \| \diver (u\otimes u)_\delta(\tau) \|_{L^r} +\| \nabla p_\delta (\tau)\|_{L^r}\Big)\,d\tau
\\& 
\leq C \delta^{\theta-1} \int_t^{t+h} \big(\| u \otimes  u (\tau)\|_{B^{\theta}_{r,\infty}}+  \| p (\tau)\|_{B^{\theta}_{r,\infty}}\big) \, d\tau
\leq C \delta^{\theta-1}\int_t^{t+h} \|u(\tau)\|^2_{B^\theta_{2r,\infty}}\,d\tau.
\end{split}
\end{equation*}
By the  Young's inequality, with respect to the unitary measure $d\tau /h$, we deduce
\[
\|u_\delta(t+h)-u_\delta(t)\|_{L^r}^s\leq C \delta^{(\theta-1)s}h^{s-1}\int_0^T\chi(\tau)_{(t,t+h)} \|u(\tau)\|^{2s}_{B^\theta_{2r,\infty}}\,d\tau,
\]
from which, by integrating in time, we conclude
\begin{equation*}
\begin{split}
\int_0^{T-h} \|u_\delta(t+h)-u_\delta(t)\|^s_{L^r}\,dt&\leq C \delta^{(\theta-1)s} h^{s-1}\int_0^{T-h}\int_0^T \chi(\tau)_{(t,t+h)}\|u(\tau)\|^{2s}_{B^\theta_{2r,\infty}}\,d\tau\,dt
\\&\leq C \delta^{(\theta-1)s} h^{s}\|u\|^{2s}_{L^{2s}(B^\theta_{2r,\infty})},
\end{split}
\end{equation*}
where in the last inequality we also used $\int_0^{T-h} \chi(t)_{(\tau-h,\tau)}\,dt\leq h$. By choosing $\delta =h$, we achieve 
\[
\left( \int_0^{T-h} \|u(t+h)-u(t)\|^s_{L^r}\,dt\right)^{\frac 1s}\leq C h^\theta \left(\|u\|_{L^{2s}(B^\theta_{2r,\infty})}+ \|u\|^2_{L^{2s}(B^\theta_{2r,\infty})}\right),
\]
from which, by taking the supremum all over $h\in (0,T)$, we conclude $u\in B^\theta_{s,\infty}((0,T);L^r(\T^3))$. Since $p$ solves \eqref{lapl_p}, we can use  \eqref{est_p_bil} with $u=w= u(t)$, $\gamma =\theta$, $s=\infty$, getting 
\begin{equation}\label{p_besov2theta}
\|p(t)\|_{B^{2\theta}_{r,\infty}} \leq C \|u(t)\|^2_{B^\theta_{2r,\infty}}.
\end{equation}
Taking the $L^s(0,T)$-norm, we deduce that $p\in L^s((0,T);B^{2\theta}_{r,\infty}(\T^3))$, namely that $(i)$ holds.\\

\noindent {\it Proof of (ii)}. Let $\theta >1/2$ and $\beta \in [0,2\theta-1)$. Note that 
\[
-\Delta (p(t+h)-p(t)) =\diver \diver \Big( \big(u(t+h)-u(t)\big) \otimes u(t+h) + u(t)\otimes \big( u(t+h)-u(t)\big)\Big).
\]
Thus, by using \eqref{est_p_bil} with $\gamma=1-\theta+\beta$, $s=\infty$, we get 
\begin{equation}\label{est_p_1}
\|p(t+h)-p(t)\|_{B^{1+\beta}_{r,\infty}}\leq C \|u(t+h)-u(t)\|_{B^{1-\theta+\beta}_{2r,\infty}} \left( \|u(t+h)\|_{B^\theta_{2r,\infty}}+\|u(t)\|_{B^\theta_{2r,\infty}}\right),
\end{equation}
and taking the $L^s(0,T-h)$-norm in time, by also using the  H\"older inequality, we achieve
\begin{equation}\label{est_p_2}
\left( \int_0^{T-h}\|p(t+h)-p(t)\|^q_{B^{1+\beta}_{r,\infty}}\,dt \right)^{\frac{1}{s}}\leq C \left( \int_0^{T-h} \|u(t+h)-u(t)\|^{\frac{3s}{2}}_{B^{1-\theta+\beta}_{2r,\infty}}\,dt\right)^{\frac{2}{3s}} \|u\|_{L^{3s}(B^\theta_{2r,\infty})}.
\end{equation}
By the interpolation inequality \eqref{interp_besov}, the H\"older inequality, and since $u\in B^\theta_{\frac{3s}{2},\infty}((0,T);L^{2r}(\T^3))$ by $(i)$, we can estimate
\begin{align*}
\int_0^{T-h} &\|u(t+h)-u(t)\|^\frac{3s}{2}_{B^{1-\theta+\beta}_{2r,\infty}}\,dt  \leq  \int_0^{T-h} \|u(t+h)-u(t)\|^{\frac{3s}{2}\frac{2\theta-1-\beta}{\theta }} _{L^{2r}}\|u(t+h)-u(t)\|^{\frac{3s}{2}\frac{1-\theta+\beta }{\theta}} _{B^{\theta}_{2r,\infty}} \,dt \\
&\leq \left(  \int_0^{T-h} \|u(t+h)-u(t)\|^\frac{3s}{2} _{L^{2r}}\,dt\right)^{\frac{2\theta-1-\beta}{\theta}}\left(  \int_0^{T-h} \|u(t+h)-u(t)\|^\frac{3s}{2} _{B^\theta_{2r,\infty}}\,dt\right)^{\frac{1-\theta+\beta}{\theta}}\\
&\leq C h^{\frac{3s}{2}(2\theta-1-\beta)} \|u\|^{\frac{3s}{2}\frac{2\theta-1-\beta}{\theta}}_{B^\theta_{\frac{3s}{2},\infty}(L^{2r})}\|u\|^{\frac{3s}{2}\frac{1-\theta+\beta}{\theta}}_{L^{\frac{3s}{2}}(B^\theta_{2r,\infty})}\leq C h^{\frac{3s}{2}(2\theta-1-\beta)}  .
\end{align*}
By plugging this last estimate in \eqref{est_p_2}, we conclude that  $p\in B^{2\theta-1-\beta}_{s,\infty}((0,T);B^{1+\beta}_{r,\infty}(\T^3))$, since we get
\[
\left( \int_0^{T-h}\|p(t+h)-p(t)\|^s_{B^{1+\beta}_{r,\infty}}\,dt \right)^{\frac{1}{s}}\leq C h^{2\theta-1-\beta}.
\]

\noindent {\it Proof of (iii)}. In order to prove the Besov regularity in time of the pressure, we split 
\begin{equation}\label{split_p}
\|p(t+h)-p(t)\|_{L^r}\leq \|p(t+h)-p_\delta(t+h)\|_{L^r}+\|p_\delta(t+h)-p_\delta(t)\|_{L^r}+\|p_\delta(t)-p(t)\|_{L^r}.
\end{equation}
Using \eqref{molli:1} and \eqref{p_besov2theta}, we have, for every $t\in (0,T),$
\[
\|p_\delta(t)-p(t)\|_{L^r}\leq C\delta^{2\theta}\|p(t)\|_{B^{2\theta}_{r,\infty}}\leq C\delta^{2\theta}\|u(t)\|^2_{B^\theta_{2r,\infty}}\leq C\delta^{2\theta}\|u(t)\|^2_{B^\theta_{3r,\infty}},
\]
 from which we deduce
\[
\int_0^{T-h} \|p(t+h)-p_\delta(t+h)\|_{L^r}^s \,dt+ \int_0^{T-h} \|p(t)-p_\delta(t)\|_{L^r}^s \,dt \leq  C \delta^{2\theta s} \|u\|^{2s}_{L^{3s}(B^\theta_{3r,\infty})}.
\]
It remains to prove the estimate for the middle term $\|p_\delta(t+h)-p_\delta(t)\|_{L^r}$ in the right-hand side of \eqref{split_p}.  Notice that  $p_\delta(t+h)-p_\delta(t)$ solves 
\begin{equation*}
\begin{aligned}
-\Delta ( &p_\delta(t+h) -p_\delta(t) ) = \diver \diver\Big( R_\delta(t) - R_\delta(t+h) +  u_\delta(t+h) \otimes u_\delta(t+h) - u_\delta(t) \otimes u_\delta(t) \Big)
\\&= \diver \diver \Big( R_\delta(t) - R_\delta(t+h) + \int_{t}^{t+h} \Big( \frac{d}{d\tau}u_\delta(\tau,x) \otimes u_\delta(\tau,x)+ u_\delta(\tau,x) \otimes  \frac{d}{d\tau} u_\delta(\tau,x) \Big) \, d\tau \Big)
\\&= \diver \diver\Big( R_\delta(t) - R_\delta(t+h) +
 \int_t^{t+h} \Big( (\diver(u_\delta \otimes u_\delta)- \nabla p_\delta- \diver R_\delta) \otimes u_\delta
 \\&\quad+ u_\delta \otimes  (\diver(u_\delta \otimes u_\delta)- \nabla p_\delta- \diver R_\delta)  \Big) \, d\tau\Big)\,.
\end{aligned}
\end{equation*}
Thus $p_\delta(t+h)-p_\delta(t)=q^1+q^2+q^3,$ where $q^1,q^2,q^3$ are the unique $0$-average solutions to 
\begin{equation*}
\begin{aligned}
-\Delta q^1&=\diver \diver( R_\delta(t,x) - R_\delta(t+h,x)),\\
\Delta q^2&= 2 \int_t^{t+h} \diver\diver( (\diver R_\delta+ \nabla p_\delta) \otimes u_\delta) \, d\tau,\\
-\Delta q^3 &= \int_t^{t+h} \diver \diver \diver(u_\delta \otimes u_\delta \otimes u_\delta ) \, d\tau.
\end{aligned}
\end{equation*}
By Calder\'on-Zygmund, \eqref{molli:3} and \eqref{molli:2} we have that 
\begin{equation*}
\|q^1(t)\|_{L^r}\leq C\big(\|R_\delta(t+h)\|_{L^r}+\|R_\delta(t)\|_{L^r}\big)\leq C\delta^{2\theta}\big( \|u(t+h)\|^2_{B^\theta_{3r,\infty}}+ \|u(t)\|^2_{B^\theta_{3r,\infty}}\big),
\end{equation*}
and
\[
\|q^2(t)\|_{L^r}\leq C \int_t^{t+h} \Big( \| \diver R_\delta(\tau)\|_{L^\frac{3r}{2}}+\|\nabla p_\delta (\tau)\|_{L^\frac{3r}{2}}\Big) \|u_\delta(\tau)\|_{L^{3r}}\,d\tau \leq C \delta^{2\theta-1}\int_t^{t+h}\|u(\tau)\|^3_{B^\theta_{3r,\infty}}\,d\tau.
\]
Hence, by taking the $L^s(0,T-h)$-norm, we deduce
\begin{equation}\label{est:q1}
\|q^1\|_{L^s(L^r)}\leq C \delta^{2\theta} \|u\|^2_{L^{3s}(B^\theta_{3r,\infty})}.
\end{equation}
and by the Young inequality we have
\begin{align}\label{est:q2}
\int_0^{T-h}\|q^2(t)\|_{L^r}^s\,dt &\leq C\delta^{(2\theta-1)s} h^{s-1}\int_0^{T-h}\left( \int_0^T \chi_{(t,t+h)}(\tau)\|u(\tau)\|^{3s}_{B^\theta_{3r,\infty}}\,d\tau\right)\,dt\nonumber \\
&\leq C\delta^{(2\theta-1)s} h^s\|u\|^{3s}_{L^{3s}(B^\theta_{3r,\infty})}.
\end{align}
For $q^3$ we can use, for any $\varepsilon>0$, \eqref{est_q_tril} with $\theta=\gamma=(1+\varepsilon)/2$, $s=\infty$, $u=w=z=u_\delta(t)$, 
getting
\begin{equation}\label{est:q^3_1}
\|q^3(t)\|_{L^r}\leq \|q^3(t)\|_{B^\varepsilon_{r,\infty}}\leq C \int_t^{t+h} \|u_\delta(\tau)\|_{L^{3r}}\|u_\delta(\tau)\|^2_{B^{\frac{1+\varepsilon}{2}}_{3r,\infty}}\,d\tau .
\end{equation}
By \eqref{interp_besov_1} and the estimate \eqref{molli:2}, we have
\[
\|u_\delta (t)\|_{B^{\frac{1+\varepsilon}{2}}_{3r,\infty}}\leq \|u_\delta(t)\|_{B^\theta_{3r,\infty}}^\frac{1-\varepsilon}{2(1-\theta)} \|u_\delta(t)\|_{W^{1,3r}}^\frac{1+\varepsilon-2\theta}{2(1-\theta)}\leq C \delta^{\theta-\frac{1+\varepsilon}{2}} \|u(t)\|_{B^\theta_{3r,\infty}}.
\]
Plugging this last estimate in \eqref{est:q^3_1}, we achieve
\[
\|q^3(t)\|_{L^r}\leq C \delta^{2\theta-1-\varepsilon}\int_t^{t+h} \|u(\tau)\|^3_{B^\theta_{3r,\infty}}\,d\tau,
\]
from which we deduce
\begin{equation}\label{est:q3}
\|q^3\|_{L^s(L^r)}\leq C \delta^{2\theta-1-\varepsilon}h \|u\|^3_{L^{3s}(B^\theta_{3r,\infty})}.
\end{equation}
Choosing $\delta=h$, from \eqref{est:q1}, \eqref{est:q2} and \eqref{est:q3}, we conclude
\[
\left( \int_0^{T-h}\|p_\delta(t+h)-p_\delta(t)\|_{L^r}^s\,dt \right)^{\frac{1}{s}}\leq C h^{2\theta-\varepsilon}\left( \|u\|^2_{L^{3s}(B^\theta_{3r,\infty})}+\|u\|^3_{L^{3s}(B^\theta_{3r,\infty})} \right),
\]
which implies that $p\in B^{2\theta-\varepsilon}_{s,\infty}((0,T);L^r(\T^3))$. If now $\theta>1/2$, we have to  prove that  $ p\in W^{1,s}((0,T);B^{2\theta-1}_{r,\infty}(\T^3))$. It is enough to show that $\partial_t p \in L^s((0,T);B^{2\theta-1}_{r,\infty}(\T^3))$. Thus we can write, by using \eqref{dert_p},  $\partial_t p=q^1+q^2$ where $q^1,q^2$ are the unique $0$-average solutions of 
\begin{equation*}
\begin{aligned}
-\Delta q^1&=\diver\diver\diver (u\otimes u \otimes u),\\
\Delta q^2& =2\diver\diver(\nabla p\otimes u).
\end{aligned}
\end{equation*}
Since, by \eqref{p_besov2theta},
\[
\|\nabla p(t)\|_{B^{2\theta-1}_{\frac{3r}{2},\infty}}\leq C \|u(t)\|^2_{B^\theta_{3r,\infty}},
\]
by Calder\'on-Zygmund 
 we get
\[
\|q^2(t)\|_{B^{2\theta-1}_{r,\infty}}\leq C \|(\nabla p \otimes u)(t) \|_{B^{2\theta-1}_{r,\infty}}\leq C \|\nabla p(t)\|_{B^{2\theta-1}_{\frac{3r}{2},\infty}} \|u(t)\|_{B^\theta_{3r,\infty}}\leq C \|u(t)\|^3_{B^\theta_{3r,\infty}}.
\]
Moreover, by \eqref{est_q_tril} with $\gamma=\theta,$ $s=\infty$ and $u=w=z=u(t),$
\[
\|q^1(t)\|_{B^{2\theta-1}_{r,\infty}}\leq C\|u(t)\|^3_{B^\theta_{3r,\infty}}.
\]
Hence, by taking the $L^s(0,T)$-norm we obtain
\[
\|\partial_t p\|_{L^s(B^{2\theta-1}_{r,\infty})}\leq\|q^1\|_{L^s(B^{2\theta-1}_{r,\infty})}+\|q^2\|_{L^s(B^{2\theta-1}_{r,\infty})} \leq C\|u\|_{L^{3s}(B^{\theta}_{3r,\infty})}^3,
\]  
which concludes the proof of $(iii)$.\\

\noindent {\it Proof of (iv)}. By  \autoref{lemma_dtp} we have that $\partial_t p$ solves \eqref{dert_p}. Therefore $\partial_t p(t+h)-\partial_t p(t)=q^1+q^2$ where 
\begin{equation*}
\begin{aligned}
\Delta q^1 &= \diver \diver \diver (u(t+h) \otimes u(t+h)\otimes u(t+h)-u(t) \otimes u(t)\otimes u(t))
\\&= \diver \diver \diver \big(( u(t+h)-u(t)) \otimes u(t+h)\otimes u(t+h)+ u(t) \otimes( u(t+h)-u(t))\otimes u(t+h)\\
&\quad \,+ u(t) \otimes u(t)\otimes (u(t+h)- u(t))\big),
\end{aligned}
\end{equation*}
\begin{equation*}
\Delta q^2= 2\diver \diver \big(\nabla p(t+h) \otimes u(t+h)-\nabla p(t) \otimes u(t) \big).
\end{equation*}
To estimate $q^1,$ for any small $\varepsilon>0,$ we apply \eqref{est_q_tril} with $\gamma=1-\theta+\varepsilon$ and $s=\infty,$   in such a way that the factor $u(t+h)-u(t)$ gets  only the $B^{1-\theta+\varepsilon}_{3r,\infty}$-norm and not the $B^{\theta}_{3r,\infty}$-norm. Thus we get 
\begin{equation*}
\begin{aligned}
\|q^1(t)\|_{L^r}&\leq \|q^1(t)\|_{B^{\varepsilon}_{r,\infty}}\leq C\|u(t+h)-u(t)\|_{B^{1-\theta+\varepsilon}_{3r,\infty}}\big(\|u(t+h)\|^2_{B^{\theta}_{3r,\infty}}+\|u(t)\|^2_{B^{\theta}_{3r,\infty}}\big).
\end{aligned}
\end{equation*}
Integrating in time on $(0,T-h)$ yields to 
\begin{equation*}
\begin{aligned}
\int_0^{T-h}\|q^1(t)\|^{s}_{L^r}\,dt&\leq C\int_0^{T-h}\|u(t+h)-u(t)\|^{s}_{B^{1-\theta+\varepsilon}_{3r,\infty}}\big(\|u(t+h)\|^{2s}_{B^{\theta}_{3r,\infty}}+\|u(t)\|^{2s}_{B^{\theta}_{3r,\infty}}\big)\,dt
\end{aligned}
\end{equation*}
and by the Cauchy-Schwarz inequality we get 
\begin{equation*}
\begin{aligned}
\int_0^{T-h}\|q^1(t)\|^s_{L^r}\,dt&\leq C\left(\int_0^{T-h}\|u(t+h)-u(t)\|^{2s}_{B^{1-\theta+\varepsilon}_{3r,\infty}}\,dt\right)^{\frac12}\|u\|_{L^{4s}(B^\theta_{3r,\infty})}^{2s}.
\end{aligned}
\end{equation*}
Now, by \eqref{interp_besov} together with the H\"older inequality in time, we have
\begin{align*}
\int_0^{T-h} \|u(t+h)-&u(t)\|^{2s} _{B^{1-\theta+\varepsilon}_{3r,\infty}}\,dt  \leq  \int_0^{T-h} \|u(t+h)-u(t)\|^{2s\frac{2\theta-1-\varepsilon}{\theta}} _{L^{3r}}\|u(t+h)-u(t)\|^{2s\frac{1-\theta+\varepsilon }{\theta}} _{B^{\theta}_{3r,\infty}} \,dt \\
&\leq \left(  \int_0^{T-h} \|u(t+h)-u(t)\|^{2s} _{L^{3r}}\,dt\right)^{\frac{2\theta-1-\varepsilon}{\theta}}\left(  \int_0^{T-h} \|u(t+h)-u(t)\|^{2s} _{B^\theta_{3r,\infty}}\,dt\right)^{\frac{1-\theta+\varepsilon}{\theta}}\\
&\leq C h^{2s(2\theta-1-\varepsilon)}\|u\|_{B^{\theta}_{2s,\infty}(L^{3r})}^{2s\frac{2\theta-1-\varepsilon}{\theta}} \|u\|^{2s{\frac{1-\theta+\varepsilon}{\theta}}}_{L^{2s}(B^\theta_{3r,\infty})}\leq C h^{2s(2\theta-1-\varepsilon)},
\end{align*}
where in the last inequality we used $u\in B^{\theta}_{3s,\infty}((0,T);L^{3r}(\T^3))\hookrightarrow  B^{\theta}_{2s,\infty}((0,T);L^{3r}(\T^3))$, that comes from $(i)$. Thus we conclude with 
\begin{equation}\label{est:q1:bis}
\int_0^{T-h}\|q^1(t)\|^{s}_{L^r}\,dt\leq C h^{s(2\theta-1-\varepsilon)}.
\end{equation}
Similarly, we obtain
\begin{equation}\label{est:q2:bis}
\begin{aligned}
\int_0^{T-h}\|q^2(t)\|_{L^r}^s\,dt & \leq C\int_0^{T-h}\|(\nabla p\otimes u)(t+h)-\nabla p\otimes u)(t)\|_{L^r}^s\,dt\leq Ch^{s(2\theta-1-\varepsilon)}\|\nabla p \otimes u\|_{B^{2\theta-1-\varepsilon}_{s,\infty}(L^r)}^{s}\\
&\leq Ch^{s(2\theta-1-\varepsilon)} \left(\|\nabla p\|_{B^{2\theta-1-\varepsilon}_{2s,\infty}(L^{2r})}\|u\|_{B^{2\theta-1-\varepsilon}_{2s,\infty}(L^{2r})} \right)^s \\
&\leq Ch^{s(2\theta-1-\varepsilon)}\left(\|\nabla p\|_{B^{2\theta-1-\varepsilon}_{2s,\infty}(L^{2r})}\|u\|_{B^{\theta}_{2s,\infty}(L^{2r})} \right)^s\leq C h^{s(2\theta-1-\varepsilon)},
\end{aligned}
\end{equation}
where we used that $u\in B^{\theta}_{2s,\infty}((0,T);L^{2r}(\T^3))$ by $(i)$, and $\nabla p \in  B^{2\theta-1-\varepsilon}_{2s,\infty}((0,T);L^{2r}(\T^3)) $ by $(ii)$.
Summing up  \eqref{est:q1:bis} and \eqref{est:q2:bis} we obtain $\partial_t p \in B^{2\theta-1-\varepsilon}_{s,\infty}((0,T);L^r(\T^3)),$ as desired.
\end{proof}

\begin{lemma}\label{lemma_dtp}
Let $u\in L^{3s}((0,T);B^\theta_{3r,\infty}(\T^3))$ for some $r,s \in [1,\infty]$ and $\theta\in (1/2,1)$. Then $\partial_t p$ solves 
\begin{equation}\label{dert_p}
-\Delta \partial_t p= \diver \diver \diver (u\otimes u \otimes u) + 2\diver \diver (\nabla p \otimes u),
\end{equation}
in the distributional sense.
\end{lemma}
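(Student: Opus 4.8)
The plan is to establish \eqref{dert_p} by first carrying out the computation formally and then justifying every step by a space-mollification argument. Formally, differentiating the pressure equation \eqref{lapl_p} in time gives $-\Delta\partial_t p=\diver\diver\big(\partial_t u\otimes u+u\otimes\partial_t u\big)$, and substituting $\partial_t u=-\diver(u\otimes u)-\nabla p$ from \eqref{E} expresses the right-hand side as a linear combination of $\diver\diver\big(\diver(u\otimes u)\otimes u+u\otimes\diver(u\otimes u)\big)$ and $\diver\diver(\nabla p\otimes u)$. The key algebraic point is that, since $\diver u=0$, the Leibniz rule yields $\partial_k(u^iu^ju^k)=\partial_k(u^iu^k)\,u^j+u^i\,\partial_k(u^ku^j)$, whence $\diver\diver\diver(u\otimes u\otimes u)=\diver\diver\big(\diver(u\otimes u)\otimes u+u\otimes\diver(u\otimes u)\big)$; combined with the symmetry $\diver\diver(\nabla p\otimes u)=\diver\diver(u\otimes\nabla p)$, this yields \eqref{dert_p}.

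Before turning to rigor, one checks that under the hypothesis $u\in L^{3s}((0,T);B^\theta_{3r,\infty}(\T^3))$ with $\theta>1/2$ every term of \eqref{dert_p} is a genuine $L^s((0,T);L^r(\T^3))$ function, so that \eqref{dert_p} is a meaningful distributional identity: indeed $u\otimes u\otimes u\in L^s((0,T);L^r)$ because $B^\theta_{3r,\infty}\hookrightarrow L^{3r}$, while \eqref{est_p_bil} applied with $w=u$ and $\gamma=\theta$ gives $p\in L^{3s/2}((0,T);B^{2\theta}_{3r/2,\infty})$, hence $\nabla p\in L^{3s/2}((0,T);B^{2\theta-1}_{3r/2,\infty})\hookrightarrow L^{3s/2}((0,T);L^{3r/2})$ since $2\theta-1>0$, and therefore $\nabla p\otimes u\in L^s((0,T);L^r)$ by H\"older. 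This is the only point where $\theta>1/2$ is used.

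For the rigorous proof, fix $\phi\in C^\infty_c((0,T)\times\T^3)$ and work with the space-mollified system \eqref{E_mol}. The field $u_\delta$ is smooth in $x$, lies in $W^{1,3s/2}$ in time (with values in $C^k(\T^3)$ for every $k$), and satisfies $\partial_t u_\delta=-\diver(u_\delta\otimes u_\delta)-\nabla p_\delta+\diver R_\delta$; since moreover $-\Delta p_\delta=\diver\diver(u_\delta\otimes u_\delta-R_\delta)$, differentiating this elliptic identity in time, substituting the expression for $\partial_t u_\delta$, and applying the Leibniz identity above to the divergence-free field $u_\delta$, one obtains, as distributions on $(0,T)\times\T^3$, an identity of the form
\[
-\Delta\partial_t p_\delta=\diver\diver\diver(u_\delta\otimes u_\delta\otimes u_\delta)+2\,\diver\diver(\nabla p_\delta\otimes u_\delta)+E_\delta,
\]
where $E_\delta$ collects the terms produced by the commutator $R_\delta$, i.e. a combination of $\diver\diver\,\partial_t R_\delta$ and $\diver\diver(\diver R_\delta\otimes u_\delta)$. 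Pairing with $\phi$ and letting $\delta\to0$: the left-hand side converges to $\langle-\Delta\partial_t p,\phi\rangle$ since $p_\delta\to p$ in $L^1_{\rm loc}$; the first two terms converge to the right-hand side of \eqref{dert_p} since $u_\delta\otimes u_\delta\otimes u_\delta\to u\otimes u\otimes u$ and $\nabla p_\delta\otimes u_\delta\to\nabla p\otimes u$ in $L^1_{\rm loc}$, by the integrability established above and the strong convergences $u_\delta\to u$ in $L^{3s}(L^{3r})$ and $\nabla p_\delta\to\nabla p$ in $L^{3s/2}(L^{3r/2})$; and $\langle E_\delta,\phi\rangle\to0$ because, by \eqref{molli:3}, $\|R_\delta\|_{L^{3s/2}((0,T);L^{3r/2})}\le C\delta^{2\theta}\|u\|^2_{L^{3s}(B^\theta_{3r,\infty})}$ and $\|\diver R_\delta\|_{L^{3s/2}((0,T);L^{3r/2})}\le C\delta^{2\theta-1}\|u\|^2_{L^{3s}(B^\theta_{3r,\infty})}$, so that $R_\delta$ (resp. $\diver R_\delta\otimes u_\delta$) paired with the appropriate derivatives of $\phi$ vanish as $\delta\to0$. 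This establishes \eqref{dert_p} in the sense of distributions.

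I expect the main obstacle to be the rigorous bookkeeping in the mollification step: justifying that at the $\delta$-level the time derivative commutes with the spatial elliptic operator, that $\partial_t(u_\delta\otimes u_\delta)=\partial_t u_\delta\otimes u_\delta+u_\delta\otimes\partial_t u_\delta$ holds in the appropriate Sobolev-in-time sense, and that the Leibniz manipulation is licit, and then checking that each commutator contribution to $E_\delta$ genuinely tends to zero once tested against an arbitrary $C^\infty_c$ function. The hypothesis $\theta>1/2$ is what makes the whole argument meaningful, since it promotes $\nabla p$ from a distribution to a function (via \eqref{est_p_bil}), so that the product $\nabla p\otimes u$, and hence the limiting equation \eqref{dert_p} itself, is well defined.
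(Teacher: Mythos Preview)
Your proof is correct and follows essentially the same strategy as the paper: mollify in space, compute the time derivative of the pressure equation at the $\delta$-level by substituting $\partial_t u_\delta$ from \eqref{E_mol}, and pass to the limit using $\theta>1/2$ to make $\diver R_\delta\to 0$. The only minor difference is that the paper works with the auxiliary pressure $p^\delta$ solving $-\Delta p^\delta=\diver\diver(u_\delta\otimes u_\delta)$ rather than the mollified pressure $p_\delta=p*\varphi_\delta$, which spares it your extra commutator term $\diver\diver\,\partial_t R_\delta$; your handling of that term by throwing $\partial_t$ onto the test function is perfectly fine.
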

\begin{proof}
For every $\delta >0$, we denote by $p^\delta$ the unique $0$-average  solution of
\[
-\Delta p^\delta= \diver \diver (u_\delta \otimes u_\delta).
\]
Note that by Calder\'on-Zygmund,  $p^\delta \rightarrow p$ in $L^{\frac{3s}{2}}((0,T);L^{\frac{3r}{2}}(\T^3))$ as $\delta  \to 0$. Thus $\partial_t p^\delta \to \partial_t p$ in distribution.
Since $\partial_t u_\delta\in L^{\frac{3s}{2}}((0,T);C^\infty(\T^3))$ from \eqref{E_mol}, we can compute 
\begin{align*}
\partial_t \diver 	\diver (u_\delta \otimes u_\delta)&= 2 \diver \diver (\partial_t u_\delta\otimes u_\delta)= \diver \diver \diver ( u_\delta\otimes u_\delta\otimes u_\delta)\\
&-2 \diver \diver (\nabla p_\delta \otimes u_\delta) + 2 \diver 	\diver (\diver R_\delta \otimes u_\delta).
\end{align*}
Obviously $u_\delta \to u$ in $L^{3s}((0,T);L^{3r}(\T^3))$. By \eqref{molli:3}, since $\theta>1/2$ we have that $\diver R_\delta \to 0$ in $L^{\frac{3s}{2}}((0,T);L^{\frac{3r}{2}}(\T^3) ) $. Moreover by $(i)$ in \autoref{t:main} we also have 
$\nabla p_\delta \to \nabla p$ in $L^{\frac{3s}{2}}((0,T);L^{\frac{3r}{2}} (\T^3)) $. Thus we conclude that in the distributional sense
\[
\partial_t \diver 	\diver (u_\delta \otimes u_\delta)\to \diver \diver \diver ( u \otimes u \otimes u)-2 \diver \diver (\nabla p \otimes u).\qedhere
\]
\end{proof}

\begin{remark}
In the above proof, one can make explicit quantitative estimates on the quantities which appear in the statement of  \autoref{t:main}. For instance, as regards $(i)$ we have 
\begin{align*}
\|u\|_{B^\theta_{s,\infty}(L^r)}& \leq C\left( \|u\|_{L^{s}(B^\theta_{r,\infty})} +
\|u\|^2_{L^{2s}(B^\theta_{2r,\infty})}\right),\\
\|p\|_{L^{s}(B^{2\theta}_{r,\infty})} &\leq C\|u\|^2_{L^{2s}(B^\theta_{2r,\infty})} 
\end{align*}
for a constant $C>0$ depending only on $r, s, \theta$.
\end{remark}

\begin{remark}[The case $r=1$] When $r=1$, the statements $(i)$ and $(ii)$ of 
\autoref{t:main} on the pressure may not be true in general. On the positive side, if $u \in L^{3s}((0,T);W^{1,1}(\T^3))$, the compensated compactness methods \cite{CLMS} give that the pressure belongs to  $ L^{\frac{3s}{2}}((0,T);W^{2,1}(\T^3))$ (namely, the result with $r=1$ and $\theta=1$ would hold). On the other side, however, if $r=1$ and $\theta=0$, the lack of the Calder\'on-Zygmund theory gives us that a solution $p$ to \eqref{eqn:p} is in general not more than in the weak-$L^1(\T^3)$ space. Trying to repeat the proof of the abstract interpolation result of \autoref{bil:thm}, as we did in \autoref{prop:bil_tril} for $r=1$, this constitutes a problem because we would need to apply the interpolation result with  $Y_1=L^1_{\rm weak, \, div}$, $Y_2=W^{2,r}_{\rm div}$. 
 Hence, \autoref{bil:thm} would only give us that $p(t) \in (L^{1}_{\rm weak}(\T^3) , W^{2,1}(\T^3))_{\theta, r}$ and it is unclear if such space would coincide with a suitable Besov-type space.
\end{remark}

\begin{proof}[Proof of \autoref{t:main-easy}] The proof is just a consequence of $(i)$, $(ii)$ and $(iv)$ of  \autoref{t:main} together with the embeddings
$W^{\theta,r}\hookrightarrow B^\theta_{r,\infty}\hookrightarrow W^{\gamma,r},$
that hold true for any $r\in [1,\infty]$ and $\theta, \gamma \in (0,1)$ with $\theta>\gamma$.
\end{proof}

\textbf{ Acknowledgements}. The first two authors have been supported by
the SNSF Grant 182565 "Regularity issues for the Navier-Stokes equations and for other variational problems".

\begin{bibdiv}
\begin{biblist}

\bib{BL}{book}{
   author={Bergh, J\"{o}ran},
   author={L\"{o}fstr\"{o}m, J\"{o}rgen},
   title={Interpolation spaces. An introduction},
   note={Grundlehren der Mathematischen Wissenschaften, No. 223},
   publisher={Springer-Verlag, Berlin-New York},
   date={1976},
   pages={x+207},
}
\bib{BDLIS15}{article}{
   author={Buckmaster, Tristan},
   author={De Lellis, Camillo},
   author={Isett, Philip},
   author={Sz\'{e}kelyhidi, L\'{a}szl\'{o}, Jr.},
   title={Anomalous dissipation for $1/5$-H\"{o}lder Euler flows},
   journal={Ann. of Math. (2)},
   volume={182},
   date={2015},
   number={1},
   pages={127--172},
   issn={0003-486X},
}
%
\bib{BDLSV2019}{article}{
   author={Buckmaster, Tristan},
   author={De Lellis, Camillo},
   author={Sz\'{e}kelyhidi, L\'{a}szl\'{o}, Jr.},
   author={Vicol, Vlad},
   title={Onsager's conjecture for admissible weak solutions},
   journal={Comm. Pure Appl. Math.},
   volume={72},
   date={2019},
   number={2},
   pages={229--274},
   issn={0010-3640},
}

\bib{BV2017}{article}{
   author={Buckmaster, Tristan},
   author={Vicol, Vlad},
   title={Nonuniqueness of weak solutions to the Navier-Stokes equation},
   journal={Ann. of Math. (2)},
   volume={189},
   date={2019},
   number={1},
   pages={101--144},
   issn={0003-486X},
}

\bib{BV2019}{article}{
   author={Buckmaster,Tristan},
   author={Vicol, Vlad},
   title={Convex integration and phenomenologies in turbulence},
   journal={arXiv preprint, https://arxiv.org/abs/1901.09023},
}
\bib{CLMS}{article}{
   author={Coifman, R.},
   author={Lions, P.-L.},
   author={Meyer, Y.},
   author={Semmes, S.},
   title={Compensated compactness and Hardy spaces},
   language={English, with English and French summaries},
   journal={J. Math. Pures Appl. (9)},
   volume={72},
   date={1993},
   number={3},
   pages={247--286},
   issn={0021-7824},
}

\bib{CDD17}{article}{
   author={Colombo, Maria},
   author={De Lellis, Camillo},
   author={De Rosa, Luigi},
   title={Ill-posedness of Leray solutions for the hypodissipative
   Navier-Stokes equations},
   journal={Comm. Math. Phys.},
   volume={362},
   date={2018},
   number={2},
   pages={659--688},
   issn={0010-3616},
}

\bib{CD18}{article}{
   author={Colombo, Maria},
   author={De Rosa, Luigi},
   title={Regularity in time of H\"older solutions of Euler and hypodissipative Navier-Stokes equations},
   journal={arXiv preprint, https://arxiv.org/abs/1811.12870},
}

\bib{C2014}{article}{
   author={Constantin, P.},
   title={Local formulas for hydrodynamic pressure and their applications},
   language={Russian, with Russian summary},
   journal={Uspekhi Mat. Nauk},
   volume={69},
   date={2014},
   number={3(417)},
   pages={3--26},
   issn={0042-1316},
   translation={
      journal={Russian Math. Surveys},
      volume={69},
      date={2014},
      number={3},
      pages={395--418},
      issn={0036-0279},
   },
}
		
\bib{CET94}{article}{
   author={Constantin, Peter},
   author={E, Weinan},
   author={Titi, Edriss S.},
   title={Onsager's conjecture on the energy conservation for solutions of
   Euler's equation},
   journal={Comm. Math. Phys.},
   volume={165},
   date={1994},
   number={1},
   pages={207--209},
   issn={0010-3616},
}

\bib{DS2013}{article}{
   author={De Lellis, Camillo},
   author={Sz\'{e}kelyhidi, L\'{a}szl\'{o}, Jr.},
   title={Dissipative continuous Euler flows},
   journal={Invent. Math.},
   volume={193},
   date={2013},
   number={2},
   pages={377--407},
   issn={0020-9910},
}

\bib{DS2014}{article}{
   author={De Lellis,Camillo},
   author={Sz\'{e}kelyhidi, L\'{a}szl\'{o}, Jr.},
   title={Dissipative Euler flows and Onsager's conjecture},
   journal={J. Eur. Math. Soc. (JEMS)},
   volume={16},
   date={2014},
   number={7},
   pages={1467--1505},
   issn={1435-9855},
}

%

%
\bib{DKM2007}{article}{
author={Duzaar, F. and Kristensen, J. and Mingione, G.}
   title={The existence of regular boundary points for non-linear elliptic systems},
   journal={ J. Reine Angew. Math.},
   volume={602},
   date={2007},
   pages={17--58},
} 

\bib{Ey94}{article}{
   author={Eyink, Gregory L.},
   title={Energy dissipation without viscosity in ideal hydrodynamics. I.
   Fourier analysis and local energy transfer},
   journal={Phys. D},
   volume={78},
   date={1994},
   number={3-4},
   pages={222--240},
   issn={0167-2789},
}
	
\bib{GM}{article}{
   author={Grafakos, Loukas},
   author={Masty\l o, Mieczys\l aw},
   title={Interpolation of bilinear operators between quasi-Banach spaces},
   journal={Positivity},
   volume={10},
   date={2006},
   number={3},
   pages={409--429},
   issn={1385-1292},
}

\bib{Is2013}{article}{
   author={Isett, Philip},
   title={Regularity in time along the coarse scale flow for the incompressible Euler equations},
   journal={arXiv preprint, https://arxiv.org/abs/1307.0565},
}

\bib{Is2018}{article}{
   author={Isett,Philip},
   title={A proof of Onsager's conjecture},
   journal={Ann. of Math. (2)},
   volume={188},
   date={2018},
   number={3},
   pages={871--963},
   issn={0003-486X},
}
\bib{K41}{article}{
   author={Kolmogoroff, A.},
   title={The local structure of turbulence in incompressible viscous fluid
   for very large Reynold's numbers},
   journal={C. R. (Doklady) Acad. Sci. URSS (N.S.)},
   volume={30},
   date={1941},
   pages={301--305},
}

\bib{LP}{article}{
   author={Lions, J.-L.},
   author={Peetre, J.},
   title={Sur une classe d'espaces d'interpolation},
   language={French},
   journal={Inst. Hautes \'{E}tudes Sci. Publ. Math.},
   number={19},
   date={1964},
   pages={5--68},
   issn={0073-8301},
}

\bib{Lun}{book}{
   author={Lunardi, Alessandra},
   title={Interpolation theory},
   series={Appunti. Scuola Normale Superiore di Pisa (Nuova Serie). [Lecture
   Notes. Scuola Normale Superiore di Pisa (New Series)]},
   edition={2},
   publisher={Edizioni della Normale, Pisa},
   date={2009},
   pages={xiv+191},
   isbn={978-88-7642-342-0},
   isbn={88-7642-342-0},
}

\bib{Ma}{article}{
   author={Masty\l o, Mieczys\l aw},
   title={Bilinear interpolation theorems and applications},
   journal={J. Funct. Anal.},
   volume={265},
   date={2013},
   number={2},
   pages={185--207},
   issn={0022-1236},
}

\bib{MoSz}{article}{
   author={Modena, Stefano},
   author={Sz\'{e}kelyhidi, L\'{a}szl\'{o}, Jr.},
   title={Non-uniqueness for the transport equation with Sobolev vector
   fields},
   journal={Ann. PDE},
   volume={4},
   date={2018},
   number={2},
   pages={Art. 18, 38},
   issn={2199-2576},
}
\bib{Ons49}{article}{
   author={Onsager, L.},
   title={Statistical hydrodynamics},
   journal={Nuovo Cimento (9)},
   volume={6},
   date={1949},
   number={Supplemento, 2 (Convegno Internazionale di Meccanica
   Statistica)},
   pages={279--287},
}
		

\bib{Tri}{book}{
   author={Triebel, Hans},
   title={Theory of function spaces},
   series={Monographs in Mathematics},
   volume={78},
   publisher={Birkh\"{a}user Verlag, Basel},
   date={1983},
   pages={284},
   isbn={3-7643-1381-1},
}

\end{biblist}
\end{bibdiv}

\end{document}